\newcommand{\Stratum}{\mathcal{H}}      
\newcommand{\C}{\mathbb{C}}             
\renewcommand{\H}{\mathbb{H}}           
\renewcommand{\P}{\mathbb{P}}           
\newcommand{\R}{\mathbb{R}}             
\newcommand{\Z}{\mathbb{Z}}             
\newcommand{\SO}{\mathrm{SO}}           
\newcommand{\SL}{\mathrm{SL}}           
\newcommand{\GL}{\mathrm{GL}}           
\newcommand{\Aut}{\mathrm{Aut}}         
\newcommand{\Mod}{\mathrm{Mod}}         
\newcommand{\Aff}{\mathrm{Aff}}         
\newcommand{\Trans}{\mathrm{Trans}}     
\newcommand{\Stab}{\mathrm{Stab}}       
\newcommand{\FP}{\mathrm{FP}}           
\newcommand{\TM}{\mathcal{T}}           
\newcommand{\Fund}{\mathcal{F}}         
\newcommand{\Curve}{\mathcal{C}}        
\newcommand{\Surface}{\mathcal{S}}      
\newcommand{\der}{D}         		
\newcommand{\pr}{\mathrm{pr}}           
\newcommand{\barpr}{\mathrm{\overline{pr}}} 
\newcommand{\id}{\mathrm{id}}           
\newcommand{\ord}{\mathrm{ord}}         
\newcommand{\isom}{\cong}               
\renewcommand{\bar}{\overline}
\newcommand{\eps}{\varepsilon}
\newcommand{\txt}[1]{\mathrm{#1}}              
\newcommand{\ra}{\rightarrow}           
\newcommand{\set}[2]{\left\{#1\mid #2\right\}} 
\newcommand{\gen}[1]{\langle#1\rangle}
\newcommand{\beq}{\begin{eqnarray*}}     
\newcommand{\eeq}{\end{eqnarray*}}       
\newcommand{\beqn}{\begin{eqnarray}}     
\newcommand{\eeqn}{\end{eqnarray}}       
\newcommand{\tcircle}{\makebox(10,7)[c]{\makebox[1ex][l]{\circle{7}}}}              
\newcommand{\tcircleblack}{\makebox(10,7)[c]{\makebox[1ex][l]{\circle*{8}}}} 
\newcommand{\textmatrix}[4]{\left(\begin{smallmatrix} #1&#2\\ #3&#4\\ \end{smallmatrix}\right)}
\newcommand{\textvector}[2]{\left(\begin{smallmatrix} #1\\ #2\\ \end{smallmatrix}\right)}
\newcommand{\eg}{e.\,g.\ }		
\newcommand{\ie}{i.\,e.\ }		
\newcommand{\wrt}{w.\,r.\,t.\ }		
\newcommand{\define}[1]{\emph{#1}}
\theoremstyle{plain}
\newtheorem{thm}{Theorem}
\newtheorem{prop}{Proposition}[section]
\newtheorem{lem}[prop]{Lemma}
\newtheorem{cor}[prop]{Corollary}
\theoremstyle{definition}
\newtheorem{defn}[prop]{Definition}
\theoremstyle{remark}
\newtheorem{rem}[prop]{Remark}
\numberwithin{equation}{section}
\title{An origami of genus 2 with a translation}
\author{F. Herrlich}
\author{A. Kappes}
\author{G. Schmith\"usen}
\date{\today}
\subjclass[2000]{14H10; 32G15; 14H30}
\thanks{This paper partially draws on results from an ongoing research project 
comissioned by the LANDESSTIFTUNG Baden-Württemberg}
\begin{document}

\begin{abstract}
We study an example of a Teichmüller curve $\Curve_S$ in the moduli space $M_2$ coming from an origami $S$. It is particular in that its points admit $V_4$ as a subgroup of the automorphism group. We give an explicit description of its points in terms of affine plane curves, we show that $\Curve_S$ is a nonsingular, affine curve of genus 0 and we determine the number of cusps in the boundary of $M_2$.
\end{abstract}

\maketitle


\section*{Introduction}
An origami is a compact surface $X$ of genus $g$ that arises from gluing finitely many Euclidean unit squares along their edges. If one uses only translations to identify edges, one obtains in a natural way a translation surface. Affine deformations of the translation structure yield new translation structures on $X$, and in particular a variation of the complex structure. One gets a geodesic disc in the Teichmüller space of compact Riemann surfaces of genus $g$, which in the case of origamis always projects to an algebraic curve, called an origami curve, in the moduli space $M_g$ of compact Riemann surfaces of genus $g$. This provides a means of studying the geometry of the moduli space by looking at complex curves in it.


This paper is devoted to the study of a particular origami $S$ of genus 2 and its curve $\Curve_S$ in the moduli space $M_2$. A picture of $S$ is drawn in Figure \ref{Figure: Origami S}; edges with the same letter are identified and $\square$, $\blacksquare$, $\tcircle$ and $\tcircleblack$ are the four vertices of the square-tiling.
\begin{figure}[ht]
\setlength{\unitlength}{1cm}
\begin{center}
\begin{picture}(4,2)
\put(0,0){\framebox(1,1){1}}
\put(1,0){\framebox(1,1){2}}
\put(2,0){\framebox(1,1){3}}
\put(1,1){\framebox(1,1){4}}
\put(2,1){\framebox(1,1){5}}
\put(3,1){\framebox(1,1){6}}

\put(0.5,-.3){\scriptsize{$c$}}
\put(0.5,1.1){\scriptsize{$c$}}
\put(1.5,-.3){\scriptsize{$d$}}
\put(1.5,2.1){\scriptsize{$d$}}
\put(2.5,-.3){\scriptsize{$e$}}
\put(2.5,2.1){\scriptsize{$e$}}
\put(3.5,0.66){\scriptsize{$f$}}
\put(3.5,2.1){\scriptsize{$f$}}

\put(-.25,.45){\scriptsize{$a$}}
\put(3.1,.45){\scriptsize{$a$}}
\put(0.75,1.45){\scriptsize{$b$}}
\put(4.1,1.45){\scriptsize{$b$}}

\put(-.15,-.15){$\square$}
\put(-.15,.88){$\square$}
\put(2.88,.88){$\square$}
\put(2.88,-.15){$\square$}
\put(2.88,1.88){$\square$}

\put(.88,-.15){$\blacksquare$}
\put(.88,.88){$\blacksquare$}
\put(.88,1.88){$\blacksquare$}
\put(3.88,.88){$\blacksquare$}
\put(3.88,1.88){$\blacksquare$}

\put(2,0){\circle{0.2}}
\put(2,2){\circle{0.2}}
\put(2,1){\circle*{0.2}}

\end{picture}\\[2mm]
\end{center}
\caption{The origami $S$\label{Figure: Origami S}}
\end{figure}
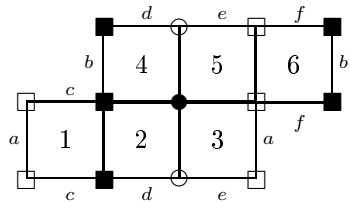

Besides the hyperelliptic involution, $S$ admits a translation $\tau$ of order 2, \ie a permutation of the squares that respects the gluings, namely $(1\, 6)(2\, 4)(3\, 5)$. Therefore, $\Curve_S$ lies in the subvariety of $M_2$, whose points admit a subgroup isomorphic to the Klein four group $V_4$ as automorphism group. This property will enable us to give a concrete description of the points on $\Curve_S$ in terms of affine plane curves.

\begin{thm}\label{Introduction - Thm - Main Result}
The origami curve $\Curve_S$ of the origami $S$ is equal to the projection of the affine curve $V\subset \C^2$ to the moduli space $M_2$, where $V$ is given by
\[V:\quad \mu(\lambda +1) - \lambda = 0,\quad \lambda \neq 0, \pm 1, -\tfrac{1}{2}, -2.\]
Explicitly, every point on $\Curve_S$ is birational as affine plane curve to
$$y^2 = (x^2-1)(x^2-\lambda^2)(x^2-\bigl(\tfrac{\lambda}{\lambda+1}\bigr)^2)$$
for some $\lambda \in \C\setminus \{0,\pm 1, -\tfrac{1}{2},-2\}$. Moreover, the curve $\Curve_S$ is an affine, regular curve of genus 0 with 2 cusps.
\end{thm}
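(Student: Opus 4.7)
The strategy is to exploit the $V_4$ structure on every point of $\Curve_S$ to reduce each fiber to a concrete normal form, and then to identify the specific one-parameter sublocus swept out by the Teichmüller disc of $S$.

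First I would show that every point of $\Curve_S$ has the claimed shape. On every genus-$2$ Riemann surface the hyperelliptic involution $\sigma$ is intrinsic, and the translation $\tau$ of $S$ remains a holomorphic involution on every fiber of the Teichmüller disc (being affine with derivative the identity); together they generate a copy of $V_4$ in $\Aut(X)$. Passing to the hyperelliptic quotient $\pi : X \to X/\sigma \cong \P^1$, the induced involution of $\P^1$ is normalized by a Möbius transformation to $x \mapsto -x$, so that the six Weierstrass points of $X$ descend to three invariant pairs $\{\pm a_1,\pm a_2,\pm a_3\}$; a further rescaling of $x$ lets us take $a_1 = 1$. Hence every point of $\Curve_S$ is represented by a curve $y^2 = (x^2-1)(x^2-\lambda^2)(x^2-\mu^2)$ with $\lambda = a_2$, $\mu = a_3$.

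Second, I would identify the specific algebraic relation $\mu(\lambda+1) - \lambda = 0$ from the geometry of $S$. Since the image of the Teichmüller disc in the $(\lambda,\mu)$-plane is one-dimensional, it is cut out by a single equation, and isolating that equation is the central computation. Concretely, one can locate the six Weierstrass points of $S$ combinatorially (as the fixed points of $\sigma$ on the square-tiled surface, found among the midpoints of edges, the vertices, and the centers of squares), read off their images under $S \to S/\sigma \cong \P^1$, and thereby determine $(\lambda,\mu)$ at a convenient basepoint; the $\SL_2(\R)$-action on the Teichmüller disc then propagates this to the whole curve $V$. Equivalently, one may compute the absolute periods of the canonical $1$-form of $S$ and compare with those of the $\tau$-invariant differential $x\,dx/y$ on the normalized model.

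For the remaining assertions I would argue as follows. Since $V$ is the graph of $\mu = \lambda/(\lambda+1)$, it is a smooth affine rational curve. The five excluded values $\lambda \in \{0,\pm 1,-\tfrac12,-2\}$ are exactly those at which either $\mu$ is undefined or two of the three pairs $\{\pm 1,\pm\lambda,\pm\mu\}$ coincide, forcing the hyperelliptic curve to degenerate; off of them $V$ lies entirely inside $M_2$. The resulting morphism $V \to M_2$ has image $\Curve_S$, and a direct check verifies that $\Curve_S$ is smooth; hence it is an affine, regular, rational curve of genus $0$. For the cusp count I would pass to $\bar V \cong \P^1$ (with $\bar V \setminus V$ the six points $\lambda \in \{0,\pm 1,-\tfrac12,-2,\infty\}$) and examine where these six points map in $\bar M_2$. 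The identifications arise from the subgroup of $\mathrm{PGL}_2(\C)$ commuting with $x \mapsto -x$ together with the possible swap of $\tau$ and $\sigma\tau$; a direct stable-reduction analysis shows that the six points collapse to only two distinct points of $\bar M_2$, which are the two cusps of $\Curve_S$.

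The main obstacle is the second step: the general form of the family follows automatically from the $V_4$ reduction, but isolating the precise equation $\mu(\lambda+1) = \lambda$ requires a concrete combinatorial or analytic computation linking the flat structure of $S$ to the algebraic normal form. The cusp count is a finite check in principle, but it depends on the stable-reduction analysis recognizing that six a priori distinct boundary points of $\bar V$ collapse to only two stable curves in $\bar M_2$.
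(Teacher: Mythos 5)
Your first step (the $V_4$ reduction to $y^2=(x^2-1)(x^2-\lambda^2)(x^2-\mu^2)$) matches the paper's Section 2, and you correctly identify the second step as the crux. But your proposed method for that step has a genuine gap: you suggest determining $(\lambda,\mu)$ at one basepoint and letting ``the $\SL_2(\R)$-action propagate this to the whole curve $V$.'' This cannot work as stated. A single point of $P$ does not determine the algebraic curve $V$, and the $\SL_2(\R)$-action on the Teichm\"uller disk admits no algebraic description in the $(\lambda,\mu)$-coordinates --- the passage from the flat picture to the branch-point cross-ratios is transcendental, which is also why ``reading off'' the images of the Weierstrass points on $S/\gen{\sigma}$ at the basepoint is itself a uniformization problem, not a combinatorial one. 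What is missing is the paper's bridge between the two worlds: the quotient $X_A/\gen{\tau}$ is the $3$-square torus $\bar S$, so the images $\lozenge,\blacklozenge$ of the two fixed points of $\tau$ are $3$-torsion points summing to the origin $@$ on the elliptic curve $(\bar X_A,@)$ --- a condition that holds simultaneously at \emph{every} point of the disk. Since these same points have explicit algebraic coordinates $(0,\pm i\lambda\mu)$ on $y^2=(x-1)(x-\lambda^2)(x-\mu^2)$ (Corollaries \ref{Section 2 - Corollary: Fixed points of automorphisms} and \ref{Section 2 - Corollary: Form of elliptic curve}), the duplication formula $[2]P_1=P_2$ yields the relation $\mu(\lambda+1)=\lambda$ up to the action of $\Gamma$. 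No version of your proposal supplies an algebraic condition of this kind, so the equation of $V$ is not actually derived.

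Two smaller points. First, ``a direct check verifies that $\Curve_S$ is smooth'' hides real work: the map $V\to M_2$ is $6$-to-$1$ onto its image (the stabilizer $\Stab_\Gamma(V)\isom S_3$ acts on $V$), and one must show that $V/\Stab_\Gamma(V)\to M_2$ is injective to exclude transverse self-intersections; this requires the orbit and intersection analysis of Lemma \ref{Section 4 - Lemma: Stabilizer of V}. Second, your cusp count via the six boundary points of $\bar V\isom\P^1$ is a legitimately different route from the paper's (which computes the Veech group and a fundamental domain, then distinguishes the two boundary points by their horizontal and vertical cylinder decompositions); the six points do fall into two orbits of size $3$ under $\Stab_\Gamma(V)$, but you would still need the stable-reduction argument to show the two resulting boundary points of $\bar{M_2}$ are distinct, which you assert rather than prove.
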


The proof uses the fact that the points $X\in M_2$ with $\Aut(X)\supseteq V_4$ can be described by affine plane curves, whose equations depend on two complex parameters $(\lambda,\mu)$. The main observation is that certain points on the origami become 3-torsion points on an elliptic curve, namely on the quotient surface of $S$ by $\gen{\tau}$; simultaneously, we know the coordinates of these points on the affine plane curve (depending on $\lambda$ and $\mu$). From this we get a relation between $\lambda$ and $\mu$, which gives the equation of the origami curve $\Curve_S$.

This is a fascinating interplay between objects that are  defined by analytical means (translation surfaces and geodesic discs in the Teichmüller space) and algebraic objects (algebraic curves in $M_g$), and it is a priori not at all obvious how one can find links between these worlds. However, our origami $S$ is by no means special among the origamis in genus 2 that have a translation. In fact, our construction generalizes to this class of origamis, which will be discussed in a future paper.

This paper grew out of the diploma thesis of one of the authors, André Kappes \cite{Ka07}.

\subsection*{Structure of this paper}
In the first section, we fix some notations and recall briefly how and why an origami defines a curve in the moduli space.

The second section is devoted to a description of the moduli space $M_2$ in terms of affine plane curves and to a discussion of loci with many automorphisms. We follow a description of Geyer \cite{Gey74}. This section is fundamental for the following discussion of the origami curve $\Curve_S$ and the proof of Theorem \ref{Introduction - Thm - Main Result}.

In the third section we discuss how the group of common automorphisms of points on a Teichmüller curve can look like in genus 2.

The fourth section supplies a proof of Theorem \ref{Introduction - Thm - Main Result}.

\subsection*{Related work}
The term \define{origamis} originated with \cite{Lo03}, but they have also been investigated by other authors under the name \define{square-tiled surfaces}. They belong to the more general class of flat surfaces, which have been studied extensively during the last years in algebraic geometry, complex analysis and dynamical systems. In this paper, we restrict to what one sometimes calls oriented origamis: they give rise to translation surfaces.

Only for a few origami curves, the algebraic equations of their points are known. Möller \cite{Moe05} gives equations for two example origamis in genus 2. In \cite{LS07}, we are given equations for all origamis of genus 2 that are tiled by 4 squares. The authors also present different families of hyperelliptic square-tiled surfaces parametrized by the genus and exhibit their equations. The $\SL_2(\R)$-orbits of square-tiled surfaces in the stratum $\Stratum(2)$ where classified by \cite{HL06} and \cite{McM05}; to our knowledge, this is still open for the stratum $\Stratum(1,1)$, to which the Teichmüller disk of the origami $S$ belongs. 

In genus 3, there is a particularly interesting origami with many nice properties \cite{HS05}; its curve intersects infinitely many other origami curves. These curves and the corresponding origamis are investigated in \cite{HS07}.

In \cite{Her06}, we are given equations of an infinte family of origamis (whose members have arbitrarily high genus). They are called Heisenberg origamis and belong to the class of characteristic origamis, \ie their Veech group is the entire group $\SL_2(\Z)$.

Origamis are somewhat more accessible than general translation surfaces, for one can compute their Veech groups explicitly \cite{Sc04} and one knows that they always define an algebraic curve in the moduli space.

\section{Translation surfaces, Teichmüller disks and origamis}
In this section, we give a short review of the general theory of Teichmüller disks and curves and origamis and origami curves in particular. References for this part are \eg \cite{Ve89}, \cite{GJ00}, \cite{EG97}, \cite{McM03}, \cite{Sc05}, \cite{HS06} to list only some.

\subsection{Notations}
We first fix some notations. If $X$ is a Riemann surface, we write $\Aut(X)$ for the group of holomorphic automorphisms of $X$. If $\theta\in \Aut(X)$, then $\FP(\theta)$ denotes the set of fixed points. In the following $I$ always denotes the identity matrix (of the appropriate dimension).

\subsection{Translation surfaces}\label{Section 1 - Translation surfaces}
Let $\omega$ be a nonzero holomorphic differential on a compact Riemann surface $X$. We can define an atlas on $X \setminus Z(\omega)$, where $Z(\omega)$ is the set of zeros of $\omega$, by using local primitives of $\omega$ as charts. We get a \define{translation surface} $(X,\omega)$, \ie the transition maps between two charts are locally translations of $\C$.

A point $P\in Z(\omega)$ leads to a singularity of the translation structure: It is a conical point with a cone angle of $2\pi(d+1)$, where $d$ is the multiplicity of the zero. By Riemann-Roch, $\omega$ has precisely $2g-2$ zeros counted with multiplicities. The moduli space $\Omega M_g$ of pairs (compact Riemann surface, holomorphic differential) is stratified by the multiplicities. In particular, $\Omega M_2$ consists of two strata $\Stratum(1,1)$ and $\Stratum(2)$, which correspond to holomorphic differentials with two simple zeros, resp. one double zero. 

On $X\setminus Z(\omega)$, one can define a flat Riemannian metric by pulling back the Euclidean metric via the coordinate charts. Geodesics for that metric are straight line segments; geodesics that connect two singularities are called \textit{saddle connections}. The \textit{lattice of relative periods} is the subgroup of $\R^2$ spanned by the vectors corresponding to saddle connections.

We always identify $\C$ with $\R^2$ by sending $\{1,i\}$ to the standard basis. Then translations are biholomorphic, and we can also view the translation structure as a complex structure on $X\setminus Z(\omega)$: We have a Riemann surface of finite type and the associated compact surface is again $X$.

\subsection{Affine diffeomorphisms}
Given translation surfaces $(X,\omega)$, $(Y,\nu)$ as above, we say that a diffeomorphism $f:X\ra Y$ is \define{affine} (\wrt the respective translation structures), if, in local coordinates, $f$ is given by
\[z\mapsto A\cdot z + t\]
for some $A\in \GL_2(\R)$ and $t\in \R^2$. If $f$ is affine, then its matrix part $A$ is globally the same. If $f$ is orientation preserving, then $A\in \GL_2^+(\R)$, and since $X$ is of finite volume, we have $A\in \SL_2(\R)$. The affine orientation preserving diffeomorphisms $X\ra X$ form a group $\Aff^+(X,\omega)$.

We get a map $\der: \Aff^+(X,\omega) \ra \SL_2(\R)$ by assigning to $f$ its matrix part $A$. The image of $\der$ is called the \define{Veech group} of $(X,\omega)$, and is denoted it by $\Gamma(X,\omega)$. Its kernel is the group of translations $\Trans(X,\omega)$, \ie maps that are automorphisms for the translation structure. Finally, we call $\Aut(X,\omega) = \der^{-1}(\{\pm I\})$ the group of biholomorphic automorphisms of $(X,\omega)$. 


\subsection{Teichmüller spaces and moduli spaces}
Let $X$ be a compact Riemann surface of genus $g$. We denote by $\TM(X)$ the Teichmüller space with base point $X$, \ie points in $\TM(X)$ are isomorphism classes of marked Riemann surfaces $(S,f)$, where $S$ is a Riemann surface and $f:X\ra S$ is an orientation preserving diffeomorphism (called a Teichmüller marking). As $\TM(X)$ only depends on the topological type of the chosen reference surface $X$, we also write $\TM(X) = \TM_{g}$.
The (coarse) moduli space of Riemann surfaces of genus $g$ is denoted by $M_g$.

The mapping class group of $X$ is the group of isotopy classes of orientation preserving diffeomorphisms, and we denote it by $\Mod(X) = \Mod(g)$. 
The mapping class group acts on the Teichmüller space and the quotient of this action is the corresponding moduli space. More precisely, if $P = (S,f)\in \TM(X)$ and $a$ is an orientation preserving diffeomorphism of $X$ then $a\cdot P = (S, f \circ a^{-1})$.

\subsection{Teichmüller disks}
A translation surface $(X,\omega)$ defines a Teichmüller disk in $\TM_{g} = \TM(X)$ in the following way. Let $A\in \SL_2(\R)$ and let us denote by $A\cdot (X,\omega)$ the translation surface obtained by postcomposing each chart with the $\R$-linear map $z\mapsto A\cdot z$. We consider the identity map on $X$ as an affine orientation preserving diffeomorphism $f_A:(X,\omega)\ra A\cdot(X,\omega)$ \wrt to the translation structures. In this way, $A$ naturally defines a point $P_A = (A\cdot (X,\omega), f_A)$ in $\TM(X)$. Thus we get a map
\[\SL_2(\R) \ra \TM(X),\ A \mapsto P_A.\]
As the action of a matrix in $\SO_2(\R)$ stabilizes $(X,\omega)$, this map factors as
\[\SO_2(\R)\backslash\SL_2(\R) = \H \rTo^{j} \TM(X),\quad \SO_2(\R)\cdot A \mapsto P_A.\]
By an appropriate choice of the identification of the upper half plane with the set $\SO_2(\R)\backslash \SL_2(\R)$, the map $j$ is a holomorphic embedding, which is an isometry for the Poincar\'e metric on $\H$ and the Teichmüller metric on $\TM(X)$. Its image is the Teichmüller disk $\Delta(X,\omega^2)$ associated to $(X,\omega)$: It is a complex geodesic in $\TM(X)$, which corresponds to the base point $X$ and the cotangent vector $\omega^2 \in (T_X\TM(X))^*$. Note that the cotangent space can be identified with the space of holomorphic quadratic differentials on $X$.


Given a Teichmüller embedding $j:\H\ra \TM(X)$ that arises from a translation surface $(X,\omega)$, we consider the stabilizer $\Stab(j(\H)) \subset \Mod(X)$. By \cite[Lemma 5.2, Theorem 1]{EG97}, this group is isomorphic to $\Aff^+(X,\omega)$. If $P_A\in j(\H)$ and $f\in \Aff^+(X,\omega)$ with $B = \der(f)$, then $f\cdot P_A = P_{AB^{-1}}$, thus $f$ stabilizes $j(\H)$. Now the action of $\Stab(j(\H))$ might not be effective on $\Delta(X,\omega^2)$, \ie there might be a nontrivial pointwise stabilizer $\Stab_0(j(\H)) \subset \Stab(j(\H))$.

\begin{prop}\label{Section 1 - Proposition: common automorphisms of TM-disk are affine}
We have $\Stab_0(j(\H)) \isom \Aut(X,\omega) = \der^{-1}(\{\pm I\})$, and this is the group of common automorphisms of the points in the Teichmüller disk $\Delta(X,\omega^2)$.
\end{prop}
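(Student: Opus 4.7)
The plan is to combine the isomorphism $\Stab(j(\H))\isom\Aff^+(X,\omega)$ recalled just before the proposition with the formula $f\cdot P_A = P_{AB^{-1}}$ for the induced action (where $B = \der(f)$), and then unwind what it means for this action to be trivial.

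Since $j$ factors through $\SO_2(\R)\backslash\SL_2(\R)$, two matrices $A,A'\in\SL_2(\R)$ give the same point of $j(\H)$ exactly when $A'A^{-1}\in\SO_2(\R)$. Applying this with $A'=AB^{-1}$, the condition $f\in\Stab_0(j(\H))$ becomes $AB^{-1}A^{-1}\in\SO_2(\R)$ for every $A\in\SL_2(\R)$, \ie the entire $\SL_2(\R)$-conjugacy class of $B$ is contained in $\SO_2(\R)$. Setting $A=I$ first forces $B\in\SO_2(\R)$, so I may write $B=R_\theta$; conjugating by the diagonal matrix $\mathrm{diag}(t,t^{-1})$ then produces an off-diagonal mismatch unless $\sin\theta=0$. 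This pins down $B=\pm I$, so $\Stab_0(j(\H))=\der^{-1}(\{\pm I\})=\Aut(X,\omega)$.

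It remains to identify this group with the common automorphisms of the points of $\Delta(X,\omega^2)$. An element $f\in\Aut(X,\omega)$ has derivative $\pm I$, so in translation charts it has the form $z\mapsto\pm z+t$; postcomposing the charts with the $\R$-linear map $z\mapsto Az$ transforms this into $z\mapsto\pm z + At$, which is still holomorphic. Hence $f$ defines a biholomorphism of $A\cdot(X,\omega)$ for every $A$, and (transported by the marking $f_A$) yields an automorphism common to every $P_A\in\Delta(X,\omega^2)$. Conversely, any such common automorphism pointwise stabilizes $j(\H)$ by construction of the $\Mod(X)$-action, and hence lies in $\Stab_0(j(\H))$ by the previous paragraph.

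The only step requiring any real thought is the linear-algebra observation that $\pm I$ are the only elements of $\SL_2(\R)$ whose entire conjugacy class lies in $\SO_2(\R)$; everything else is a direct translation of the definitions and of the action formula recalled in the excerpt.
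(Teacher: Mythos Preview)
Your argument is correct, but it follows a different line than the paper's. You lean on the isomorphism $\Stab(j(\H))\isom\Aff^+(X,\omega)$ from \cite{EG97} (quoted just before the proposition), so that any $\theta\in\Stab_0(j(\H))$ is already known to be affine; combined with the action formula $f\cdot P_A=P_{AB^{-1}}$ and the injectivity of $j$, the question collapses to the linear-algebra fact that $\{\pm I\}$ are the only elements of $\SL_2(\R)$ whose full conjugacy class lies in $\SO_2(\R)$. The paper instead does not assume affineness in advance: it views $\theta\in\Stab_0(j(\H))$ merely as a biholomorphism of $X$ (since it fixes the base point), writes the condition ``$f_A\theta f_A^{-1}$ is holomorphic on $A\cdot(X,\omega)$ for all $A$'' in local translation charts, and uses the Cauchy--Riemann equations to force the real derivative of $\theta$ at each point into $\R^\times\cdot I$, whence $\theta$ is affine with derivative $\pm I$. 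Your route is shorter and more conceptual, extracting everything from the cited global structure of $\Stab(j(\H))$; the paper's route is more self-contained for this particular statement and makes the role of holomorphicity explicit at the level of charts. Both end with the same easy converse, that derivative $\pm I$ is preserved under conjugation by any $A$ and hence gives a biholomorphism of every $A\cdot(X,\omega)$.
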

\begin{proof}
Note that $\theta \in \Stab_0(j(\H))$ can be considered as an element of $\Aut(X)$. It follows from the definition of the action of $\Mod(g)$ that $\theta$ stabilizes each point in $\Delta(X,\omega^2)$, if and only if $f_A\circ \theta \circ f_A^{-1}$ is a biholomorphic automorphism of $A\cdot (X,\omega)$ for all $A\in \SL_2(\R)$. We describe $f_A\circ\theta\circ f_A^{-1}$ in local coordinates. Thus we can break down the argument to the case that we are given a holomorphic map $h:U\ra \C$ on a domain $U\subset \C$, for which the map
\[(z\mapsto A\cdot z) \circ h\circ (z\mapsto A^{-1}\cdot z)\]
is holomorphic for all $A\in \SL_2(\R)$.
Let $z_0\in U$ and let $B = B(z_0)$ be the real derivative of $h$ at $z_0$. By the Cauchy-Riemann differential equations, $B\in \R^\times\cdot\SO_2(\R)$. By our assumption, $ABA^{-1}\in \R^\times\cdot\SO_2(\R)$. If we plug in $A = \left(\begin{smallmatrix}
					1 & 1 \\
					0 & 1
                                      \end{smallmatrix}\right)$, we find that $B\in \R^\times\cdot I$, which implies $h'(z_0) \in \R^\times$. So $h'(U) \subset \R$, which is not open in $\C$. Since $h'$ is holomorphic on $U$, this forces $h'$ to be constant. Therefore, $\theta$ is affine with derivative $B\in \R^\times\cdot I \cap \{\pm I\}$, \ie $\theta \in \Aut(X,\omega)$.
On the other hand, every element in $\Aff^+(X,\omega)$ with derivative $\pm I$ stabilizes each point in $\Delta(X,\omega^2)$.
\end{proof}

Now that we have defined Teichmüller disks (which are a priori analytical objects), let us study under which conditions they give rise to algebraic curves in the moduli space. Let $\hat\Gamma(X,\omega) = R\Gamma(X,\omega)R^{-1}$ be the mirror Veech group (where $R = \left(\begin{smallmatrix}
	-1 & 0 \\
	 0 & 1 \\
      \end{smallmatrix}\right)$). Then the map $j$ is equivariant for the action of $\hat\Gamma(X,\omega)$ on $\H$ (via Möbius transformations) and $\Aff^+(X,\omega)$ on $j(\H)$. (Note that the two groups are linked via $\Aff^+(X,\omega) \ra \hat\Gamma(X,\omega)$, $f\mapsto R\der(f)R^{-1}$.) Therefore, we can pass on both sides to the quotient.
A \define{Teichmüller curve} is an algebraic curve in the moduli space that is the image of a Teichmüller disk (under the natural projection). The following criterion determines, when Teichmüller curves occur.

\begin{prop}[see {\cite[Corollary 3.3]{McM03}}]\label{Section 1 - Proposition: Teichmueller Curve iff Veech group is lattice}
A Teichmüller disk $\Delta(X,\omega^2)$ projects to a Teichmüller curve $\Curve$ in the moduli space, if and only if the Veech group $\Gamma(X,\omega)$ is a lattice in $\SL_2(\R)$, \ie $\H/\Gamma(X,\omega)$ is a Riemann surface of finite type. In this case, $\H/\hat\Gamma(X,\omega)$ is the normalization of $\Curve$.
\end{prop}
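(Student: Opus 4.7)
My plan is to unpack the proposition into the two implications and then pin down where the work actually lives. The setup from the preceding discussion gives a holomorphic isometric embedding $j:\H\to\TM(X)$ equivariant for $\hat\Gamma(X,\omega)$ acting on $\H$ by Möbius transformations and $\Stab(j(\H))\cong\Aff^+(X,\omega)$ acting on $j(\H)$, with pointwise stabilizer $\Aut(X,\omega)$ by Proposition \ref{Section 1 - Proposition: common automorphisms of TM-disk are affine}. Composing with $\TM_g\to M_g$ and taking the quotient, $j$ therefore factors through a generically injective holomorphic map $\bar\jmath:\H/\hat\Gamma(X,\omega)\to M_g$ whose image is $\Curve$.

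For the forward direction, assume $\Gamma(X,\omega)$ is a lattice in $\SL_2(\R)$; then so is $\hat\Gamma(X,\omega) = R\Gamma R^{-1}$, and $Y:=\H/\hat\Gamma(X,\omega)$ is a Riemann surface of finite type, obtained from a compact Riemann surface $\bar Y$ by deleting finitely many cusp points. I would then argue that $\bar\jmath$ extends to a holomorphic morphism $\bar Y\to\bar M_g$ into the Deligne--Mumford compactification. Given that extension, the image is a compact analytic subvariety of the projective variety $\bar M_g$, hence algebraic by Chow/GAGA, and its restriction to $M_g$ is a quasi-projective curve, namely $\Curve$. Because $\bar\jmath$ is generically injective, $\bar Y\to\bar\Curve$ is the normalization map, giving the second statement of the proposition.

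The converse I would do by reversing the analysis: if $\Curve$ is an algebraic curve, its normalization $\tilde\Curve$ is a finite-type Riemann surface, and lifting the map $\H\to\Curve\subset M_g$ (through $j$ and the $\Mod(g)$-action) provides a covering of $\tilde\Curve$ by $\H$ with deck group containing $\hat\Gamma(X,\omega)$ as a subgroup of finite index. Finite hyperbolic area of $\tilde\Curve$ then forces $\hat\Gamma(X,\omega)$, and therefore $\Gamma(X,\omega)$, to have finite covolume in $\SL_2(\R)$, \ie to be a lattice.

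The real obstacle is the extension of $\bar\jmath$ over the cusps of $\H/\hat\Gamma(X,\omega)$. Each cusp of the Veech group corresponds to a parabolic direction of $(X,\omega)$, in which the surface decomposes into cylinders; as one flows toward the cusp the conformal structure degenerates and the curve $X$ acquires nodes obtained by collapsing the core curves of those cylinders. Making this precise, \ie verifying that the limiting stable curve in $\bar M_g$ is well-defined and that $\bar\jmath$ is holomorphic across the puncture, is the technical heart of the argument; everything else is formal consequences of the equivariance and of GAGA on $\bar M_g$.
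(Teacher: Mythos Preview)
The paper does not prove this proposition at all: it is stated with the attribution ``see \cite[Corollary 3.3]{McM03}'' and no proof follows. So there is nothing in the paper to compare your argument against; your proposal is an attempt to reconstruct McMullen's proof, not to match the paper's.

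As a reconstruction, your forward direction is essentially the standard argument and is fine, including your identification of the cusp extension as the technical core. Your converse, however, has a gap at the crucial step. You assert that ``lifting the map $\H\to\Curve\subset M_g$ \dots\ provides a covering of $\tilde\Curve$ by $\H$ with deck group containing $\hat\Gamma(X,\omega)$ as a subgroup of finite index,'' but neither the covering property nor the finite-index claim is automatic. What makes this work is the fact that $j:\H\to\TM_g$ is an isometry for the Teichm\"uller ($=$ Kobayashi) metric, and the Kobayashi metric on the normalization $\tilde\Curve$ of an algebraic curve is its hyperbolic metric; hence the induced map $\H\to\tilde\Curve$ is a local isometry between hyperbolic surfaces, therefore a covering. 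Once that is established, the deck group $\Gamma'$ is a lattice (since $\tilde\Curve$ has finite type), and one checks that two points of $\H$ have the same image in $M_g$ iff they differ by an element of $\hat\Gamma(X,\omega)/\{\pm I\}$, forcing $\Gamma' = \hat\Gamma(X,\omega)/\{\pm I\}$. Without invoking the metric rigidity you have no reason to exclude, say, that $\H\to\tilde\Curve$ has infinite degree or fails to be a covering near accumulation points of the Veech group limit set.
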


\subsection{Origamis}
An origami can be depicted as follows. Take a finite number of unit squares in the plane and glue the upper edge of a square to a lower edge of a square and the left edge of a square to a right edge of a square. We require the edges to be identified by a translation, and the resulting surface to be connected. This yields a tiling of our surface into squares, and we have a covering map to the origami that consists of only one square, i.e. a torus. More precisely, we make the following definition.

\begin{defn}
Let $E$ be a topological torus and $\bar P\in E$. An \define{origami} is a (finite) covering $O = (p:X\ra E)$, where $X$ is a compact topological surface of genus $g\geq 1$, such that $p$ is ramified at most over the point $\bar P$.
\end{defn}

Note that an origami is a topological or even combinatorial object; the topological structure is given precisely by a monodromy representation $\pi_1(E) \isom F_2 \ra S_d$, where $d$ is the degree of the covering $p:X\ra E$.

Let $O= (p:X\ra E)$ be an origami. We choose a complex structure on $E$, i.e. we take the torus $E_A$, which is defined as follows. Let $A\in \SL_2(\R)$, and let $\Lambda_A$ be the lattice in $\C$ spanned by the columns of $A$. Then $E_A = \C/\Lambda_A$ and $\bar P = 0+\Lambda_A$. We can pull the holomorphic differential $\omega_A$ on $E_A$ (which is unique up to a scalar in $\C$) back to a holomorphic differential $p^*\omega_A$ and end up with a translation surface $X_A = (X,p^*\omega_A)$. Every point on the Teichmüller disk $\Delta(X,(p^*\omega_A)^2)$ then arises by running through all possible complex structures that can be put on $E$, i.e. by running through all of $\SL_2(\R)$. In particular, we have $B\cdot X_A = X_{BA}$.



The Teichmüller disk $\Delta_O = \Delta(X,(p^*\omega_A)^2)$ of an origami $O = (p:X\ra E)$ only depends on the combinatorial data of the covering $p$ and not on the choice of the base point $X_A$. In particular, we can work with the base point $X_I$, which is a cover of the standard torus $E_I$, whence $X_I$ is called \textit{square-tiled}. Furthermore, the Veech groups of points in the same Teichmüller disk are all conjugated in $\SL_2(\R)$. We define the \textit{Veech group of the origami} $O$ to be the group $\Gamma(O) := \Gamma(X,p^*\omega_I)$.



An origami always defines a Teichmüller curve in the moduli space, for the following theorem implies that $\Gamma(O)$ always is a lattice in $\SL_2(\R)$. The corresponding Teichmüller curve to an origami $O$ is called \textit{origami curve} $\Curve_O$.

\begin{prop}[see {\cite[Theorem 5.5]{GJ00}}] \label{Section 1 - Gutkin and Judge say...}
A translation surface $(X,\omega)$ is square-tiled, if and only if the groups $\SL_2(\Z)$ and $\Gamma(X,\omega)$ are commensurate.
\end{prop}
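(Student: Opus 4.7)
The proof should split into the two implications, and I would attack each separately, using the action of the Veech group on saddle connections and the cylinder decompositions produced by parabolic elements.

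For the implication \emph{square-tiled $\Rightarrow$ Veech groups commensurable}: Suppose $(X,\omega)$ arises as an origami $p : X \to E$, where $E = \C/\Z^2$. Every saddle connection of $\omega$ projects to a closed path of $E$ based at the ramification point $\bar P$, and integration of $\omega$ along it gives $\int_\gamma \omega = \int_{p\circ\gamma} dz \in \Z^2$. Hence the lattice of relative periods is contained in $\Z^2$, and since the horizontal and vertical edges of the squares are themselves saddle connections it equals $\Z^2$. Any $A \in \Gamma(X,\omega)$ permutes the set of saddle connections and therefore preserves this lattice, forcing $\Gamma(X,\omega) \subseteq \SL_2(\Z)$. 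For the finite index, note that $\SL_2(\Z)$ acts on isomorphism classes of degree-$d$ origamis via $[p : X \to E] \mapsto [p : X \to A\cdot E]$; this set is finite since it is a quotient of the finite set of transitive homomorphisms $F_2 \to S_d$. The stabilizer of $[(X,\omega)]$ consists exactly of the matrices $A$ for which the identity map $X \to A\cdot X$ can be postcomposed with a translation equivalence to produce an affine self-map of $X$ with derivative $A$, i.e.\ it is $\Gamma(X,\omega)$.

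For the converse implication: assume $\Gamma(X,\omega)$ is commensurable with $\SL_2(\Z)$. Then it contains a principal congruence subgroup $\Gamma(n)$, hence in particular the parabolic matrices $\textmatrix{1}{n}{0}{1}$ and $\textmatrix{1}{0}{n}{1}$. A parabolic element in $\Gamma(X,\omega)$ in a given direction forces the surface to decompose into metric cylinders in that direction whose moduli are commensurable (this is the classical Veech/Thurston dictionary between parabolics and cylinder decompositions). From the horizontal parabolic one gets a complete horizontal cylinder decomposition whose moduli lie in $\tfrac{1}{n}\Z$; the vertical parabolic gives the same in the vertical direction. I would then argue, using these two transverse decompositions, that after rescaling by a real constant every absolute and relative period of $\omega$ lies in $\Q^2$: the horizontal cylinders have rational circumferences and rational heights, and the horizontal saddle connections running between singularities on opposite boundary components again have rational length, and analogously vertically. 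Hence the relative periods of $\omega$ span a lattice commensurable with $\Z^2$, and the developing map of the translation structure descends to a map $p : X \to \R^2/\Z^2 = E$, ramified only at the single image of all the zeros of $\omega$; this is the sought origami structure.

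The main obstacle is the converse direction, specifically translating the purely group-theoretic hypothesis ``$\Gamma(X,\omega)$ contains a finite-index subgroup of $\SL_2(\Z)$'' into rationality statements for \emph{all} periods of $\omega$. The parabolic-cylinder dictionary gives rationality of widths and heights of cylinders in two transverse directions, but promoting this to rationality of all relative periods (so that the developing map actually descends to a branched cover of a single square torus with ramification concentrated at one point) requires a careful compatibility argument between the two cylinder decompositions at their common singularities, which is the heart of the Gutkin--Judge theorem.
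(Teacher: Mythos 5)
First, a point of comparison: the paper offers no proof of this proposition at all --- it is imported verbatim from Gutkin--Judge \cite[Theorem 5.5]{GJ00} --- so your argument can only be measured against the literature, not against anything in the text. Judged on its own terms, your sketch follows the standard route but contains one genuine error and one admitted hole. The error is in the forward direction: the horizontal and vertical edges of the unit squares are \emph{not} in general saddle connections, because their endpoints need not be zeros of $\omega$ (for the origami $S$ of this paper, the vertices $\tcircle$ and $\tcircleblack$ are regular points). Consequently the lattice of relative periods is only a finite-index sublattice $\Lambda\subseteq\Z^2$, and your conclusion $\Gamma(X,\omega)\subseteq\SL_2(\Z)$ is false in general --- the paper itself stresses exactly this immediately after the proposition, in the remark on primitive origamis and the reference to \cite{Moe05} for a counterexample. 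The step is repairable: one gets $\Gamma(X,\omega)\subseteq\SL(\Lambda)$, which is conjugate to $\SL_2(\Z)$ by a rational matrix and hence commensurable with it; alternatively one passes to the punctured surface $X^*=X\setminus p^{-1}(\bar P)$, whose affine maps descend to $E_I$. Your finiteness argument via the $\SL_2(\Z)$-action on degree-$d$ origamis then correctly yields $[\SL_2(\Z):\Gamma(X,\omega)\cap\SL_2(\Z)]<\infty$, but you should also justify $[\Gamma(X,\omega):\Gamma(X,\omega)\cap\SL_2(\Z)]<\infty$, e.g.\ because $\Gamma(X,\omega)$ is a discrete group containing the lattice $\Gamma(X,\omega)\cap\SL_2(\Z)$, hence contains it with finite index.

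For the converse you candidly flag that the passage from ``commensurable cylinder moduli in two transverse directions'' to ``all relative periods rational after rescaling'' is left open, and that is indeed the entire content of the theorem: the two parabolics control circumferences and heights of cylinders, but not the positions of the singularities along the cylinder boundaries, which is what the relative periods record. As it stands, the proposal is a correct (if locally flawed) outline of one implication and a statement of intent for the other, not a proof. Since the paper itself treats the result as a black box, the honest course is the same: cite \cite{GJ00} (or a later treatment via the holonomy field) rather than attempt to reprove it.
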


Note that the Veech group of an origami need not be a subgroup of $\SL_2(\Z)$ (see \eg \cite{Moe05} for an example). This is the case, if one uses too many squares in the tiling. (One could for instance construct a new origami out of a given one by subdividing each square into 4 smaller squares.) One way to circumvent this problem is to consider $X^* = X\setminus p^{-1}(\bar P)$ instead of $X$, which forces every affine diffeomorphism to descend to the torus $E_I$, whence to have matrix part in $\SL_2(\Z) = \Gamma(E_I,\omega_I)$. Then the Veech group $\Gamma(X^*,p^*\omega_I) = \Gamma(X,p^*\omega_I) \cap \SL_2(\Z)$ has finite index in $\Gamma(X,p^*\omega_I)$ by Proposition \ref{Section 1 - Gutkin and Judge say...}.

A more convenient way is to consider primitive origamis: an origami is called \textit{primitive}, if the lattice of relative periods $\Lambda(\omega_I)$ is $\Z^2$.

\begin{rem}[see \eg {\cite[Lemma 2.3]{HL06}}] \label{Section 1 - Remark: Primitive Origamis have entire Veech group}
If $O$ is a primitive origami, then $\Gamma(O) \subset \SL_2(\Z)$.
\end{rem}

Note that the origami $S$ is primitive, since the vectors $\textvector{1}{0}$ and $\textvector{0}{1}$ are contained in the lattice of relative periods.

\section{Points in genus 2 with more automorphisms}
Let $M_2$ be the moduli space of compact Riemann surfaces of genus 2. As we will see later in Section 4, every point $X$ of our origami curve $\Curve_S$ has a subgroup $G$ of $\Aut(X)$ that is isomorphic to $V_4$, the Klein four group. $G$ is generated by the translation $\tau$ of $S$ and the hyperelliptic involution $\sigma$.

In this section, we describe the points on the subvariety of $M_2$ that admit $V_4$ as a subgroup of their automorphism group. This can be found \eg in \cite{Gey74} or \cite{Ig60}, where points of $M_2$ are classified according to their automorphism group.

\subsection{Automorphism groups of points in $M_2$}
In general, a compact Riemann surface $Y$ of genus 2 carries a (unique) hyperelliptic involution $\sigma \in \Aut(Y)$. Let $\phi : Y\ra Y/\gen{\sigma} \isom \P^1$ denote a quotient map. Then $\phi$ is ramified precisely over a six-point set $B\subset \P^1$, which is the image of the set $\FP(\sigma)$. Note that $\phi$ is unique up to composition with an element of $\Aut(\P^1)$. Since $\sigma$ is central, every automorphism of $Y$ descends to $\P^1$ and induces a permutation of $B$. Conversely, every $\theta\in \Aut(\P^1)$ that satisfies $\theta(B) = B$ can be lifted to $Y$. So instead of studying $\Aut(Y)$, we can look at $\bar{\Aut(Y)} = \Aut(Y)/\gen{\sigma} \subset \Aut(\P^1)$. Note that $\bar{\Aut(Y)}$ is isomorphic to a subgroup of the symmetric group $S(B) = S_6$; every $\theta\in \bar{\Aut(Y)}$ has two fixed points and every orbit that does not contain a fixed point has the same number of elements.

In the following, let $C_n$ denote the cyclic group of order $n$ and let $D_n$ denote the dihedral group of order $2n$. For an overall view of $M_2$ we cite the following proposition.

\begin{prop}[see {\cite[Satz 3, Satz 4]{Gey74}}]\label{Section 2 - Propositon: Overview of M_2}
$M_2$ is a 3-dimensional, rational, normal, affine variety with one singular point $P$ which corresponds to the unique $Y\in M_2$ with $\bar{\Aut(Y)} \isom C_5$.

The points $Y\in M_2$ with $\bar{\Aut(Y)}\supseteq C_2$ form a rational surface $\Surface \subset M_2$. For a generic point $Y\in \Surface$, one has $\bar{\Aut(Y)} \isom C_2$. There are two rational curves $U$ and $U' \subset \Surface$, where $\bar{\Aut(Y)}$ is bigger. For each point $Y$ on $U$, we have $\bar{\Aut(Y)} \supseteq V_4$ and for each point $Y\in U'$, we have $\bar{\Aut(Y)}\supseteq S_3$. In both cases, we have equality, except for two points $Q$ and $Q'$ where $U$ and $U'$ intersect. There, $\bar{\Aut(Q)} \isom S_4$ and $\bar{\Aut(Q')} \isom D_6$.

The surface $\Surface$ is not normal, and its singular locus is precisely the curve $U$.
\end{prop}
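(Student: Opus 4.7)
The plan is to identify $M_2$ with the moduli space of six unordered points on $\P^1$ modulo $\mathrm{PGL}_2(\C)$. Every compact Riemann surface $Y$ of genus $2$ is hyperelliptic and carries a unique hyperelliptic involution $\sigma$, so the quotient $\phi\colon Y \to Y/\gen{\sigma} \isom \P^1$ realizes $Y$ as a double cover branched over a six-element set $B \subset \P^1$. Two such $Y$'s are isomorphic precisely when their branch sets are $\mathrm{PGL}_2(\C)$-equivalent, giving
\[M_2 \;\isom\; \{\text{unordered six-subsets of }\P^1\}/\mathrm{PGL}_2(\C).\]
Classical invariant theory of binary sextics, via Igusa's generators $J_2,J_4,J_6,J_{10}$ of the invariant ring with the discriminant $J_{10}$ inverted to exclude degenerate configurations, then exhibits $M_2$ as a rational, normal, affine threefold. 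This settles the first sentence of the proposition.

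Under the above identification, the induced group $\bar{\Aut}(Y) = \Aut(Y)/\gen{\sigma}$ is precisely $\Stab_{\mathrm{PGL}_2(\C)}(B)$. I would invoke the classical list of finite subgroups of $\mathrm{PGL}_2(\C)$ (cyclic $C_n$, dihedral $D_n$, polyhedral $A_4, S_4, A_5$) and the fact that every nontrivial element of such a subgroup has exactly two fixed points on $\P^1$. For each candidate $G$ I would enumerate how a six-element $G$-invariant set can decompose into $G$-orbits, with allowed orbit sizes dividing $|G|$ and the special short orbits coming from fixed points of nontrivial elements of $G$. This combinatorial bookkeeping rules out $A_5$ and $A_4$ and bounds $|G|$ small, leaving only the stabilizers $C_2, V_4, S_3, C_5, S_4, D_6$ on the list.

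For each surviving $G$ I would fix a standard embedding into $\mathrm{PGL}_2(\C)$ (any two copies of a given abstract group in $\mathrm{PGL}_2(\C)$ are conjugate) and parametrize the $G$-invariant six-sets modulo the residual action of $N_{\mathrm{PGL}_2(\C)}(G)$. Taking $G = C_2 = \gen{z\mapsto -z}$ produces a two-parameter family modulo scaling, defining the rational surface $\Surface$; for $G = V_4$ one normalizes by $z\mapsto -z$ and $z\mapsto 1/z$, forcing $B = \{\pm 1, \pm\lambda, \pm\lambda^{-1}\}$ for a single parameter $\lambda$, giving a rational curve $U$; for $G = S_3$, a generic $S_3$-orbit of size six supplies a one-parameter rational family $U'$; for $G = C_5$ the only admissible decomposition is $6 = 5+1$, yielding a unique point $P$; and for $G = S_4$ and $G = D_6$ one obtains single points $Q, Q'$, which must lie on both $U$ and $U'$ because each of $S_4$ and $D_6$ contains both $V_4$ and $S_3$ as subgroups. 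That $\bar{\Aut}(Q)\isom S_4$ and $\bar{\Aut}(Q')\isom D_6$, without further enlargement, follows from the same short list.

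The remaining ingredient is the local geometry. At $P$ one shows that the $C_5$-action on the three-dimensional tangent space to the configuration space fixing a $C_5$-invariant six-set has all characters equal to nontrivial fifth roots of unity, producing a genuine cyclic quotient singularity, while at all other special loci the stabilizer acts on the normal directions through pseudoreflections and the quotient remains smooth; hence $P$ is the unique singular point of $M_2$. The non-normality of $\Surface$ along $U$ would be detected by observing that a $V_4$-point $Y$ carries three distinct non-hyperelliptic involutions, each defining its own local branch of $\Surface$ through $Y$; three sheets of $\Surface$ therefore meet along $U$, making $U$ the non-normal locus. In contrast, at a generic point of $U'$ the extra symmetry is an order-three element that does not split $\Surface$ into branches, so $\Surface$ stays locally irreducible there. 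The main obstacle will be precisely these local computations: carrying out the tangent-space character analysis at each special locus, and checking the irreducibility of local branches of $\Surface$ along $U'$ outside $Q, Q'$, both requiring careful bookkeeping with the normalizer actions on the parameter spaces.
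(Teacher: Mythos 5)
The paper offers no proof of this proposition: it is quoted verbatim from Geyer \cite{Gey74} (Satz~3 and Satz~4), so there is no in-paper argument to measure yours against. That said, your outline follows exactly the classical route that Geyer and Igusa take: identify $M_2$ with unordered six-point configurations on $\P^1$ modulo $\mathrm{PGL}_2(\C)$, get rationality, normality and affineness from the invariant theory of binary sextics, identify $\bar{\Aut(Y)}$ with the stabilizer of the branch locus $B$, and run the orbit-size bookkeeping over the finite subgroups of $\mathrm{PGL}_2(\C)$. That part of the sketch is sound; note only that $A_4$ is not ``ruled out'' by the combinatorics --- it does admit an invariant six-set, namely the octahedral orbit, but the full stabilizer of that set is $S_4$, which is precisely how the point $Q$ arises.

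Two issues remain. First, your branch count along $U$ is wrong. A point $Y\in U\setminus\{Q,Q'\}$ has $\bar{\Aut(Y)}\isom V_4$ with three involutions, but in the normal form $B=\{\pm 1,\pm\lambda,\pm\lambda^{-1}\}$ one of them, $z\mapsto z^{-1}$, has both fixed points inside $B$; it does not give rise to a separate two-dimensional sheet of $\Surface$ (a six-set invariant under an involution whose fixed points lie in the set is forced to contain both fixed points and hence already lies on the one-dimensional locus $U$). Only two sheets of $\Surface$ cross along $U$, consistent with Proposition~\ref{Section 2 - Proposition: Where P mod Gamma to M_2 is not injective} of the paper, where the normalization $P/\Gamma\to\Surface$ is shown to be $2$ to $1$ over $U\setminus\{Q\}$. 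The qualitative conclusion (non-normality along $U$) survives, but your mechanism overcounts. Second, the genuinely hard content --- that the invariant ring exhibits $M_2$ as a normal affine threefold with the $C_5$-point as its only singularity, and the tangent-space character computations at the special loci --- is announced rather than carried out; you rightly flag this as the main obstacle, but as written the proposal defers to Igusa's and Geyer's computations instead of reproducing them, so it should be regarded as a correct plan rather than a complete proof.
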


\subsection{Points in the surface $\Surface$}
We will develop a more precise description of $\Surface$. Given a point $Y\in \Surface$, let $\Aut(Y)$ have the subgroup $\{\id,\sigma,\tau,\sigma\tau\}\isom V_4$, where $\sigma$ is the hyperelliptic involution on $Y$ and $\tau$ is another involution. First, we show that we can choose $\tau$ such that $\FP(\tau)\cap \FP(\sigma) =\emptyset$.

\begin{lem}\label{Section 2 - Lemma: wlog sigma and tau have no common fixed point}
If $\FP(\tau) \cap \FP(\sigma) \neq \emptyset$, then there is an involution $\tau'\in \Aut(Y)$ such that $\FP(\tau')\cap \FP(\sigma) = \emptyset$.
\end{lem}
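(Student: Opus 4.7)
The plan is to use the hyperelliptic quotient $\phi\colon Y \to Y/\langle\sigma\rangle \isom \P^1$. Since $\sigma$ is central in $\Aut(Y)$, $\tau$ commutes with $\sigma$ and descends to an involution $\bar\tau$ of $\P^1$ that permutes the six-point branch set $B = \phi(\FP(\sigma))$. The strategy is to find a different involution $\bar\tau'$ of $\P^1$ preserving $B$ whose two fixed points on $\P^1$ lie outside $B$, and lift it to an involution $\tau' \in \Aut(Y)$; the fixed-point condition on $\bar\tau'$ will then force $\FP(\tau') \cap \FP(\sigma) = \emptyset$.

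The first step is to pin down the combinatorics of $\FP(\tau)$. Riemann--Hurwitz applied to $Y \to Y/\langle\tau\rangle$, together with the uniqueness of the hyperelliptic involution (which excludes a rational quotient), forces $|\FP(\tau)| = 2$. Since $\tau$ commutes with $\sigma$, it acts as an involution on the six-element set $\FP(\sigma)$, so the number of common fixed points is even. Combined with $|\FP(\tau)| = 2$, the hypothesis $\FP(\tau) \cap \FP(\sigma) \neq \emptyset$ forces $\FP(\tau) \subset \FP(\sigma)$, so $\bar\tau$ fixes exactly two elements of $B$ and swaps the remaining four in pairs. Normalizing coordinates on $\P^1$, I may assume $\bar\tau(x) = -x$ with $B = \{0, \infty, \pm\alpha, \pm\beta\}$, where $\alpha,\beta \neq 0$ and $\alpha \neq \pm\beta$.

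Next I introduce $\bar\tau'(x) = \alpha\beta/x$, an involution of $\P^1$ sending $0 \leftrightarrow \infty$, $\alpha \leftrightarrow \beta$ and $-\alpha \leftrightarrow -\beta$, hence preserving $B$. Its fixed points $\pm\sqrt{\alpha\beta}$ lie outside $B$, since the constraints on $\alpha,\beta$ exclude $\alpha\beta \in \{0, \alpha^2, \beta^2\}$. By the lifting property noted before the lemma (every element of $\Aut(\P^1)$ preserving $B$ lifts to $\Aut(Y)$), $\bar\tau'$ has a preimage in $\Aut(Y)$.

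The main obstacle is to verify that a preimage of $\bar\tau'$ may be chosen to be an involution, since \emph{a priori} a lift of an order-$2$ element of $\bar{\Aut(Y)}$ can have order $4$. I would overcome this by exploiting the fixed point $P' = \sqrt{\alpha\beta}$ of $\bar\tau'$ off $B$: the fiber $\phi^{-1}(P') = \{P_1, P_2\}$ consists of two non-Weierstrass points swapped by $\sigma$, and the two lifts of $\bar\tau'$ (which differ by $\sigma$) act on this pair with opposite behaviours. The lift fixing $P_1$ and $P_2$ pointwise squares to an automorphism descending to $\id$ on $\P^1$ and fixing $P_1$, hence equals $\id$ rather than $\sigma$. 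Calling this involution $\tau'$, the inclusion $\FP(\tau') \subseteq \phi^{-1}(\FP(\bar\tau'))$ together with $\FP(\bar\tau') \cap B = \emptyset$ gives $\FP(\tau') \cap \FP(\sigma) = \emptyset$ as required.
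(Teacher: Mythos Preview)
Your proof is correct and follows essentially the same route as the paper: descend $\tau$ to $\bar\tau$ on $\P^1$, use Riemann--Hurwitz to get $|\FP(\tau)|=2$, normalize so that $\bar\tau(z)=-z$ and $B=\{0,\infty,\pm\alpha,\pm\beta\}$, and then take the involution $z\mapsto \alpha\beta/z$ (the paper normalizes one branch point to $1$ and uses $z\mapsto \alpha/z$, which is the same map). Your argument that the lift can be chosen of order~$2$, via a fixed point of $\bar\tau'$ lying off $B$, is a detail the paper leaves implicit.
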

\begin{proof}
Since $\sigma$ is central, the map $\tau$ descends to $\bar\tau \in \Aut(\P^1)$. Moreover, $\sigma$ permutes the fixed points of $\tau$; it follows from the Riemann-Hurwitz formula, that this is a 2-element set, so by our assumption, $\sigma$ fixes $\FP(\tau)$ pointwise. Thus, $\phi(\FP(\tau)) \subset B$; we postcompose $\phi:Y\ra \P^1$ with a Möbius transformation that sends $\phi(\FP(\tau))$ to $\{0,\infty\}$ and a third element $b\in B$ to $1$. In this way, $\bar\tau$ becomes the map $z\mapsto -z$ and
\[B = \{0,\infty, 1, -1, \alpha, -\alpha\}\]
for some $\alpha \in \C\setminus \{0,\pm 1\}$. The set $B$ is also invariant, if we apply
$z\mapsto \alpha z^{-1}$ (as well as $z\mapsto -\alpha z^{-1}$), which is an involution without fixed points in $B$, hence lifts to an involution $\tau' \in \Aut(Y)$ as desired.
\end{proof}

Note that if $\tau$ and $\sigma$ have no common fixed point, then $\FP(\tau)$, $\FP(\sigma)$ and $\FP(\sigma\tau)$ are mutually disjoint. Proceeding as in the above proof, we can show the first part of the following lemma.

We define the parameter space $P$ to be
\beqn\label{Section 2 - Equation: Parameter space} P = (\C\setminus\{0,\pm 1\})^2 \setminus (\Delta \cup \Delta'),\eeqn
where $\Delta \subset \C^2$ denotes the diagonal and $\Delta' = \set{(z,-z)}{z\in \C}$.

\begin{lem}\label{Section 2 - Lemma: Choice of covering map and parameters}
\begin{enumerate}[a)]
\item Let $Y\in \Surface$ and let $\tau\in \Aut(Y)$ be a fixed involution such that $\FP(\tau)\cap \FP(\sigma) =\emptyset$. Then there exists a quotient map $\phi:Y\ra \P^1$ for the action of the hyperelliptic involution $\sigma$ on $Y$, such that the automorphism $\tau\in\Aut(Y)$ descends to the map
\[\bar\tau:\P^1\ra\P^1,\ z\mapsto -z,\]
and such that the set of branch points of $\phi$ is of the form
\[B = \{1,-1,\lambda,-\lambda,\mu,-\mu\},\]
where $(\lambda,\mu) \in P$.
\item If $\phi':Y\ra \P^1$ is a map with the same properties as $\phi$, then $\phi' = \delta\phi$, where $\delta:\P^1\ra\P^1$ is one of the maps in the set
\begin{equation}\label{Section 2 - Equation: List of deltas}
\begin{split}
  &\{\,\id,\ (z\mapsto \lambda^{-1}z),\ (z\mapsto \mu^{-1}z)\,\} \\
  \cup& \   \{\,\bar\tau,\ (z\mapsto \lambda^{-1}z)\circ\bar\tau,\ (z\mapsto \mu^{-1}z)\circ \bar\tau\,\}\\
  \cup& \  \{\,(z\mapsto z^{-1}),\ (z\mapsto \lambda z^{-1}),\ (z\mapsto \mu z^{-1})\,\} \\
  \cup& \  \{\,(z\mapsto z^{-1})\circ \bar\tau,\ (z\mapsto \lambda z^{-1})\circ\bar\tau,\ 
                   (z\mapsto \mu z^{-1})\circ \bar\tau\,\}.
\end{split}
\end{equation}
\end{enumerate}
\end{lem}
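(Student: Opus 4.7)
The plan for part (a) is to follow the normalization strategy of Lemma \ref{Section 2 - Lemma: wlog sigma and tau have no common fixed point}. Starting from any quotient map $\phi_0:Y\to\P^1$ for $\sigma$, the centrality of $\sigma$ forces $\tau$ to descend to an involution $\bar\tau_0\in\Aut(\P^1)$, which has exactly two fixed points. Since $\FP(\tau)\cap\FP(\sigma)=\emptyset$ by assumption, these fixed points lie outside the branch set $\phi_0(\FP(\sigma))$. I post-compose $\phi_0$ with a Möbius transformation $M$ sending the two fixed points of $\bar\tau_0$ to $\{0,\infty\}$; then $\bar\tau$ becomes the unique involution of $\P^1$ with these fixed points, namely $z\mapsto -z$. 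The branch set $B$ is $\bar\tau$-invariant and disjoint from $\{0,\infty\}$, hence $B=\{\pm a_1,\pm a_2,\pm a_3\}$ with $a_i\in\C^\times$. A final rescaling $z\mapsto z/a_1$ commutes with $z\mapsto -z$ and preserves $\{0,\infty\}$, yielding $B=\{\pm 1,\pm\lambda,\pm\mu\}$ with $\lambda=a_2/a_1$ and $\mu=a_3/a_1$. The statement that these six points are distinct translates directly into $(\lambda,\mu)\in P$.

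For part (b), I would use the classical fact that any two quotient maps for a Galois cover differ by post-composition with an element of the deck group of the target, \ie $\phi'=\delta\circ\phi$ for some $\delta\in\Aut(\P^1)$. The condition that $\tau$ also descend to $z\mapsto -z$ via $\phi'$ is the conjugation relation $\delta\bar\tau\delta^{-1}=\bar\tau$, so $\delta$ must normalize the subgroup $\langle \bar\tau\rangle$. A direct calculation on $\delta(z)=(az+b)/(cz+d)$ imposes $ac=0$ and $bd=0$, and after eliminating the degenerate cases one finds that $\delta$ must be either a scaling $z\mapsto c\,z$ (fixing $\{0,\infty\}$ pointwise) or an inversion $z\mapsto \alpha/z$ (swapping them).

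The second requirement on $\phi'$ is that $\delta(B)=\{\pm 1,\pm\lambda',\pm\mu'\}$ for some $(\lambda',\mu')\in P$; in particular, the pair $\{1,-1\}$ must belong to $\delta(B)$. Since $\delta$ commutes with $z\mapsto -z$, it permutes the three $\bar\tau$-orbits $\{1,-1\},\{\lambda,-\lambda\},\{\mu,-\mu\}$, so $\{1,-1\}=\delta(\{\pm e\})$ for exactly one $e\in\{1,\lambda,\mu\}$. In the scaling case this forces $c=\pm 1/e$, producing $3\cdot 2=6$ candidates; in the inversion case it forces $\alpha=\pm e$, producing another $6$. Grouping the results by whether one pre- or post-composes with $\bar\tau$ recovers the twelve Möbius transformations listed in \eqref{Section 2 - Equation: List of deltas}.

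The main bookkeeping step is the converse check: for each of the twelve candidates $\delta$, the image $\delta(B)$ must actually be of the required form with $(\lambda',\mu')\in P$. This is routine and uses only that $(\lambda,\mu)\in P$ (for instance, $\lambda/\mu\neq\pm 1$ follows from $\lambda\neq\pm\mu$, and $1/\lambda\neq\pm 1$ from $\lambda\neq\pm 1$). There is no genuine obstacle in the argument; the subtlety is purely organizational, namely ensuring that the case analysis in (b) is exhaustive and that the normalization in (a) uses only Möbius transformations lying in the centralizer of $\bar\tau$ after the first step, so that the reduction is genuine.
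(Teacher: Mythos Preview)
Your proposal is correct and follows essentially the same route as the paper: for (a) you reproduce the normalization already sketched in the proof of Lemma~\ref{Section 2 - Lemma: wlog sigma and tau have no common fixed point} (the paper simply refers back to that argument), and for (b) you derive $\delta\bar\tau\delta^{-1}=\bar\tau$, conclude that $\delta$ is a scaling or an inversion, and pin down the scalar by requiring $1\in\delta(B)$, exactly as the paper does. The only cosmetic difference is that you compute with the matrix form $(az+b)/(cz+d)$ to obtain $ac=bd=0$, whereas the paper observes directly that $\delta$ must permute $\FP(\bar\tau)=\{0,\infty\}$; these are equivalent one-line arguments.
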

\begin{proof}
Only part b) needs to be justified. By the general theory, there exists $\delta\in\Aut(\P^1)$, such that $\phi' = \delta\phi$. The map $\phi'$ satisfies $\bar\tau\phi' = \phi'\tau$. Thus we have $\bar\tau\delta\phi = \delta\bar\tau\phi,$ which leads to $\bar\tau = \delta\bar\tau\delta^{-1}$, because $\phi$ is surjective. Then $\delta$ permutes the fixed points of $\bar\tau$. So either $\delta(0) = \infty$ and $\delta(\infty) =0$, whereby $\delta = (z\mapsto rz^{-1})$ for $r\in \C^\times$, or $\FP(\delta) = \{0,\infty\}$, which implies $\delta = (z\mapsto rz)$, $r\in \C^\times$.
Let $B'\subset \P^1$ be the set of branch points of $\phi'$. Then $\delta(B) = B'$. Since $1\in B'$, there exists $b\in B$, such that $\delta(b) =1$. This determines the factor $r$, and $\delta$ is one of the maps in the list. Conversely, every map in the list induces a covering map $\delta\phi$ of the desired form.
\end{proof}

We use the fact that the categories of compact Riemann surfaces (with non-constant holomorphic maps) and projective regular curves over $\C$ (with morphisms between them) are equivalent. From Lemma \ref{Section 2 - Lemma: Choice of covering map and parameters} and the general form of hyperelliptic curves, we directly get the following proposition.

\begin{prop}\label{Section 2 - Proposition: Form of curves with Property star}
The compact Riemann surfaces in $\Surface \subset M_2$ correspond bijectively to the isomorphism classes of affine plane curves $C_{\lambda,\mu}$ given by
\[v^2 = (u^2-1)(u^2-\lambda^2)(u^2-\mu^2)\]
where $(\lambda, \mu) \in P$ (and $P$ is as in Equation (\ref{Section 2 - Equation: Parameter space})). More precisely, for any such surface $Y$, there is a choice of parameters $(\lambda,\mu)\in P$, such that $Y$, considered as a projective regular curve, is birational to $C_{\lambda,\mu}$. Conversely, the associated compact Riemann surface to $C_{\lambda,\mu}$ defines a point in $\Surface$. The subgroup of $\Aut(C_{\lambda,\mu})$ in question is 
\[\{\id,\ (u,v)\mapsto (u,-v),\ (u,v)\mapsto(-u,v),\ (u,v)\mapsto (-u,-v)\},\]
where $(u,v)\mapsto (u,-v)$ is the hyperelliptic involution. The quotient map to $\P^1$ is given by $(u,v)\mapsto u$.
\end{prop}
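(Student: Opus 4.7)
The plan is to translate Lemma \ref{Section 2 - Lemma: Choice of covering map and parameters} from the language of branched covers to the language of affine plane models. Given $Y\in\Surface$, choose an involution $\tau\in\Aut(Y)$ generating, with $\sigma$, the given $V_4$; using Lemma \ref{Section 2 - Lemma: wlog sigma and tau have no common fixed point}, we may assume $\FP(\tau)\cap\FP(\sigma)=\emptyset$. Apply Lemma \ref{Section 2 - Lemma: Choice of covering map and parameters}(a) to obtain a quotient map $\phi:Y\to\P^1$ whose branch locus is $B=\{\pm 1,\pm\lambda,\pm\mu\}$ with $(\lambda,\mu)\in P$. By the general theory of hyperelliptic curves, $Y$ is isomorphic as a compact Riemann surface to the smooth projective model of the affine curve obtained by extracting a square root of $\prod_{b\in B}(u-b)=(u^2-1)(u^2-\lambda^2)(u^2-\mu^2)$, that is, $Y$ is birational to $C_{\lambda,\mu}$.

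For the converse, given $(\lambda,\mu)\in P$, the conditions defining $P$ (namely $\lambda,\mu\notin\{0,\pm 1\}$ and $\lambda\neq\pm\mu$) guarantee that the six roots $\pm 1,\pm\lambda,\pm\mu$ are pairwise distinct, so $C_{\lambda,\mu}$ is nonsingular and its smooth projective completion is a compact Riemann surface of genus $2$. The assignment $(u,v)\mapsto(u,-v)$ is its hyperelliptic involution, and since the polynomial on the right-hand side is a function of $u^2$, the map $(u,v)\mapsto(-u,v)$ is a well-defined involution; together with their product $(u,v)\mapsto(-u,-v)$ and the identity they form a subgroup isomorphic to $V_4$, showing that the corresponding point lies in $\Surface$. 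The quotient map $(u,v)\mapsto u$ clearly realizes $\phi$, and the descent of $(u,v)\mapsto(-u,v)$ is $z\mapsto -z=\bar\tau$, as required.

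It remains to verify bijectivity on isomorphism classes. Two curves $C_{\lambda,\mu}$ and $C_{\lambda',\mu'}$ define the same point of $\Surface$ iff they are isomorphic as projective regular curves, iff (by the equivalence of categories) the underlying Riemann surfaces are isomorphic. Such an isomorphism descends to an element of $\Aut(\P^1)$ carrying the branch set $\{\pm 1,\pm\lambda,\pm\mu\}$ onto $\{\pm 1,\pm\lambda',\pm\mu'\}$, and conjugating the action of $\bar\tau$ (up to the choice of $\tau$ in its $V_4$-coset), so the change of parameters is governed precisely by the list of Möbius transformations in Lemma \ref{Section 2 - Lemma: Choice of covering map and parameters}(b). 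Hence the map from $\Surface$ to isomorphism classes of curves $C_{\lambda,\mu}$ is well-defined and bijective.

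The routine verifications (that the listed automorphisms are holomorphic, that $P$ is exactly the right parameter space to make $C_{\lambda,\mu}$ smooth, and that every $\delta$ in the list of Lemma \ref{Section 2 - Lemma: Choice of covering map and parameters}(b) does preserve the shape $\{\pm 1,\pm\lambda',\pm\mu'\}$ of the branch set for some $(\lambda',\mu')\in P$) are the only step with any content; there is no serious obstacle, since the heavy lifting has already been done in Lemma \ref{Section 2 - Lemma: Choice of covering map and parameters}.
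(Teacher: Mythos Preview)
Your proposal is correct and follows essentially the same route as the paper: the paper's proof is a one-liner stating that the result follows directly from Lemma~\ref{Section 2 - Lemma: Choice of covering map and parameters} and the general form of hyperelliptic curves via the equivalence of categories, and your first two paragraphs spell this out explicitly.

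One minor remark on your third paragraph: invoking Lemma~\ref{Section 2 - Lemma: Choice of covering map and parameters}(b) for bijectivity is unnecessary. Once both directions are in place, the bijection is tautological, since a point of $M_2$ \emph{is} an isomorphism class of compact Riemann surfaces, and by the equivalence of categories this is the same datum as an isomorphism class of projective regular curves. The finer question of which pairs $(\lambda,\mu)$ and $(\lambda',\mu')$ yield isomorphic curves is not what is being asked here, and Lemma~\ref{Section 2 - Lemma: Choice of covering map and parameters}(b) alone does not settle it anyway: that lemma presupposes that the isomorphism intertwines the chosen $\tau$'s, which need not hold when $\bar{\Aut(Y)}$ contains more than one conjugacy class of involutions. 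This subtlety is exactly what is treated later in Propositions~\ref{Section 2 - Proposition: Surface S in the moduli space} and~\ref{Section 2 - Proposition: Where P mod Gamma to M_2 is not injective}.
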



\subsection{From the parameter space to the moduli space}
We investigate the map $\pr : P\ra M_2$ that sends $(\lambda,\mu) \in P$ to the isomorphism class of the curve $C_{\lambda,\mu}$. We come very close to $M_2$ itself by using a group action on the parameter space $P$ (see \cite{Gey74}).

\begin{prop}\label{Section 2 - Proposition: Surface S in the moduli space}
The group $\Gamma$ generated by
\[a:(\lambda,\mu)\mapsto (\lambda^{-1},\mu^{-1}),\qquad b:(\lambda,\mu)\mapsto (\mu,\lambda),\qquad c:(\lambda,\mu)\mapsto (\lambda^{-1},\lambda^{-1}\mu)\]
\[d:(\lambda,\mu)\mapsto (-\lambda,\mu),\qquad e:(\lambda,\mu)\mapsto (-\lambda,-\mu)\]
acts on the algebraic variety $P$ as a group of automorphisms. The following holds:
\begin{enumerate}[a)]
\item $\Gamma$ is isomorphic to the semidirect product $V_4\rtimes_\varphi D_6$, where the dihedral group $D_6 \isom \gen{a,b,c}$ acts on the Klein four group $V_4 \isom \gen{d,e}$ by conjugation.
\item The map $\pr:P\ra M_2$, $(\lambda,\mu)\mapsto C_{\lambda,\mu}$ induces a
surjective, birational morphism
\[\barpr: P/\Gamma \ra \Surface \subset M_2.\]
\item If we restrict $\barpr$ to $\barpr^{-1}((\Surface \setminus U) \cup \{Q\})$, where $U$ and $Q$ are defined as in Proposition \ref{Section 2 - Propositon: Overview of M_2}, then $\barpr$ is injective.
\end{enumerate}
\end{prop}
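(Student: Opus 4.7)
For (a), each generator is a well-defined automorphism of $P$ because each permutes the excluded locus $\{\lambda = 0, \pm 1\} \cup \{\mu = 0, \pm 1\} \cup \Delta \cup \Delta'$ among itself; for instance $a(\lambda^{-1}) = \lambda \neq \pm 1$ iff $\lambda^{-1} \neq \pm 1$, and similarly for the other generators. Direct calculations give $a^2 = b^2 = c^2 = d^2 = e^2 = 1$, that $a$ commutes with both $b$ and $c$, and that $(bc)^3 = 1$; this produces $\gen{b,c} \cong S_3$ and hence $\gen{a,b,c} \cong C_2 \times S_3 \cong D_6$. Checking that the conjugates $gdg^{-1}, geg^{-1}$ for $g \in \{a,b,c\}$ land in $\gen{d,e}$ shows that $V_4 \cong \gen{d,e}$ is normalized by $D_6$. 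The two factors intersect trivially (only $\gen{d,e}$ changes signs of $\lambda, \mu$, whereas no element of $\gen{a,b,c}$ does), which gives $\Gamma \cong V_4 \rtimes D_6$ and establishes (a).

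For (b), the map $\pr$ is a morphism since the coefficients of the defining equation of $C_{\lambda,\mu}$ are polynomial in $(\lambda,\mu)$. For $\Gamma$-invariance I would exhibit an explicit birational isomorphism $C_{\lambda,\mu} \dashrightarrow C_{g(\lambda,\mu)}$ for each generator $g$: the equation is literally unchanged under $b$, $d$, $e$; the scaling $(u,v) \mapsto (\lambda^{-1}u, \lambda^{-3}v)$ realizes $c$; and an inversion $(u,v) \mapsto (u^{-1}, cu^{-3}v)$ with a scalar $c$ depending on $\lambda,\mu$ realizes $a$. Surjectivity of $\barpr$ onto $\Surface$ is exactly Proposition \ref{Section 2 - Proposition: Form of curves with Property star}. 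For birationality I would count the generic fiber: by Proposition \ref{Section 2 - Propositon: Overview of M_2} a generic $Y \in \Surface$ satisfies $\Aut(Y) \cong V_4$, so the descending involution $\bar\tau \in \Aut(\P^1)$ is unique (since $\tau$ and $\sigma\tau$ descend to the same element). By Lemma \ref{Section 2 - Lemma: Choice of covering map and parameters}(b) the valid quotient maps $\phi$ form a $12$-element set; pairs $\{\delta, \bar\tau \circ \delta\}$ produce the same image $\delta(B)$, leaving $6$ distinct branch sets. Each such $B'$ admits $8$ parameter readings (choice of sign of $\lambda'$, of $\mu'$, and the swap $\lambda' \leftrightarrow \mu'$), giving exactly $6 \cdot 8 = 48 = |\Gamma|$ preimages and hence a single free $\Gamma$-orbit. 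Since $P/\Gamma$ and $\Surface$ are both irreducible of dimension $2$, this generic injectivity yields birationality.

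For (c) I would proceed by case analysis over $Y \in (\Surface \setminus U) \cup \{Q\}$. The case $Y \in \Surface \setminus (U \cup U')$ is already settled by the count in (b). For $Y \in U' \setminus \{Q,Q'\}$ one has $\bar{\Aut(Y)} \cong S_3$, and the three subgroups $C_2 \subset S_3$ lift to three $V_4 \subseteq \Aut(Y)$, yielding a priori three $\Gamma$-orbits in $\pr^{-1}(Y)$; these three are permuted cyclically by an order-$3$ element of $\bar{\Aut(Y)}$, and a short computation in coordinates shows that this cyclic permutation is realized by the order-$3$ element $bc \in \Gamma$, so all three orbits coincide. For $Y = Q$ one has $\bar{\Aut(Q)} \cong S_4$, whose nine involutions give up to nine $V_4$'s in $\Aut(Q)$; a parallel but longer argument exploits the transitive $S_4$-action on each of its two conjugacy classes of involutions (three double transpositions and six transpositions) to identify the nine parameter triples as $\Gamma$-translates of a single one. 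The main obstacle is precisely this bookkeeping at $Q$, where one must match nine specific parameter triples against explicit elements of $\Gamma$ without error; a convenient shortcut is to invoke the explicit classification of $M_2$ with large automorphism group from \cite{Gey74}.
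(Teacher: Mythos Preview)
Your part (a) and the $\Gamma$-invariance portion of (b) match the paper. For birationality in (b) you take a different route: you count the generic fiber as $6\cdot 8 = 48 = |\Gamma|$, whereas the paper simply derives birationality from the injectivity statement in (c). Your count is correct, but the clause ``hence a single free $\Gamma$-orbit'' skips a step: having $48$ fiber elements and $|\Gamma|=48$ does not by itself give a single orbit; you must still check that $\Gamma$ acts transitively on the fiber (equivalently, that each of your $48$ choices is realized by some $\gamma\in\Gamma$). This is easy once you match the generators $a,c$ to the $\delta$'s of Lemma~\ref{Section 2 - Lemma: Choice of covering map and parameters}(b) and $b,d,e$ to the sign/swap ambiguity, but it should be said.

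For (c) your approach diverges from the paper's and runs into genuine trouble. The paper does \emph{not} split into cases by automorphism type. Instead it argues uniformly: given an isomorphism $h\colon Y_1\to Y_2$, the transported involution $h\tau_1h^{-1}$ descends to some $\tilde\tau\in\bar{\Aut(Y_2)}$, and the key point is that for $Y\in(\Surface\setminus U)\cup\{Q\}$ the relevant involutions in $\bar{\Aut(Y)}$ (those without fixed points in $B$) form a single conjugacy class. One conjugates $\tilde\tau$ to the standard $\bar\tau$ by some $\beta\in\bar{\Aut(Y_2)}$, lifts $\beta$ to $k\in\Aut(Y_2)$, replaces $h$ by $kh$, and then Lemma~\ref{Section 2 - Lemma: Choice of covering map and parameters}(b) applies directly to produce a $\delta$ in the list and hence a $\gamma\in\Gamma$. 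This handles $C_2$, $S_3$, and $S_4$ in one stroke with no bookkeeping.

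Your case analysis, by contrast, has two problems. First, the assertion that the cyclic permutation of the three $V_4$'s on $U'\setminus\{Q,Q'\}$ ``is realized by the order-$3$ element $bc\in\Gamma$'' is a specific computational claim you have not verified; the paper's argument shows only that \emph{some} $\gamma\in\Gamma$ does the job, arising from the $\delta$ produced by the lemma. Second, at $Q$ you concede the argument is incomplete and fall back on citing \cite{Gey74}, which is not a proof. Incidentally, of the nine involutions in $S_4$ only the six transpositions (the edge-axis rotations in the octahedral picture) have no fixed point in $B$; the three double transpositions fix two branch points each and are ruled out by Lemma~\ref{Section 2 - Lemma: wlog sigma and tau have no common fixed point}, so the relevant count at $Q$ is six, not nine --- and these six form a single $S_4$-conjugacy class, which is exactly what makes the paper's uniform argument go through there.
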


\begin{proof}
\textit{Part a).} Note that each of these maps is a well-defined automorphism of $P$. Clearly, $d^2= e^2 = \id$ and $de=ed$, so $\gen{d,e} \isom V_4$. Moreover, $a^2 = \id$, and one easily shows that $ab = ba$, $ac= ca$. The elements $b$ and $bc$ generate a subgroup isomorphic to $S_3$. Surely, $b^2 = \id$, and an easy computation shows that $bc$ has order 3 and that $b(bc) = (bc)^2b$. Therefore, 
\[\gen{a,b,c} \isom (\Z/2\Z)\times S_3 \isom D_6.\]
It remains to show that $\gen{d,e}$ is a normal subgroup of $\Gamma$. This can be verified on the generators:
$$ada = d, \ aea = e, \ bdb= de, \ beb = e,\ cdc = e,\ cec = d.$$
Thus, $\varphi:\gen{a,b,c}\ra \Aut(\gen{d,e}),\ g\mapsto (h\mapsto ghg^{-1})$ is a well-defined homomorphism and $\Gamma \isom V_4\rtimes_\varphi D_6$.

\textit{Part b).} By Proposition \ref{Section 2 - Proposition: Form of curves with Property star}, the map $\pr:P \ra \Surface$ is surjective. Next, we justify that $\pr:P\ra M_2$ factors through $P/\Gamma$. So let $(\lambda,\mu)\in P$ and $(\lambda',\mu') = \gamma \cdot(\lambda,\mu)$ for $\gamma\in\Gamma$. We have compact Riemann surfaces $Y$ and $Y'$ and degree 2-coverings $\phi:Y\ra P^1$, $\phi':Y'\ra\P^1$, which are branched over
\[B = \{1,-1,\lambda,-\lambda,\mu,-\mu\}\quad \txt{and}\quad B'=\{1,-1,\lambda',-\lambda',\mu',-\mu'\}\]
respectively. Now $Y\isom Y'$, if either we already have $B=B'$ or if there is $\delta$ from the list in Lemma \ref{Section 2 - Lemma: Choice of covering map and parameters}, such that $B' = \delta(B)$. Indeed, the affine curve $C_{\lambda,\mu}$ together with the covering $(u,v)\mapsto u$ is uniquely determined by the set of its branch points, and this covering is unique up to composition with such a $\delta$. Applying $\gamma\in \gen{b,d,e}$ to $(\lambda,\mu)$ does not affect the correspondig set $B$, so there is nothing to show. Applying $a$ to $(\lambda,\mu)$ corresponds to composing the covering $\phi:Y\ra \P^1$ with $\delta = (z\mapsto z^{-1})$, so $C_{\lambda,\mu} \isom C_{\lambda^{-1},\mu^{-1}}$. In the same way, $c$ corresponds to $\delta = (z\mapsto \lambda^{-1}z)$. This shows that we get a map $\barpr:P/\Gamma \ra M_2$. The birationality of $\barpr$ follows from Part c), since $\Surface\setminus U$ is open in $\Surface$.

\textit{Part c).} Let $(\lambda_1,\mu_1)$, $(\lambda_2,\mu_2)\in P$, and let $Y_i$ be the compact Riemann surface associated to $C_{\lambda_i,\mu_i}$, $i=1,2$. Let
\[\Aut(Y_i) \supseteq \{\id,\sigma_i,\tau_i,\sigma_i\tau_i\} \isom V_4,\]
where $\sigma_i$ is the hyperelliptic involution on $Y_i$, and $\tau_i$ is another involution with $\FP(\sigma_i)\cap \FP(\tau_i) = \emptyset$. Let $\phi_i:Y_i\ra \P^1$ be the associated covering map, coming from the projection onto the first coordinate of $C_{\lambda_i,\mu_i}$. Then $B_i = \{1,-1,\lambda_i,-\lambda_i,\mu_i,-\mu_i\}$ is the set of branch points of $\phi_i$.

Suppose that $Y_1$, $Y_2 \in (\Surface\setminus U) \cup \{Q\}$ and that we have an isomorphism $h:Y_1\ra Y_2$. We show that there is an isomorphism $h':Y_1\ra Y_2$ such that the covering map $\phi' = \phi_2 h':Y_1\ra \P^1$ satisfies the hypothesis of Lemma \ref{Section 2 - Lemma: Choice of covering map and parameters} b). Hence there exists a $\delta$ in List \ref{Section 2 - Equation: List of deltas} with $\delta \phi_1 = \phi_2 h'$. Thus, the branch points of $\phi_1$ are altered by $\delta$ from List \ref{Section 2 - Equation: List of deltas}, and one easily shows that if $\delta(B_1) = B_2$, then there is $\gamma\in \Gamma$ such that $\gamma\cdot (\lambda_1,\mu_1) = (\lambda_2,\mu_2)$.

To begin with, note that $\phi_2 h$ is also a quotient map for the hyperelliptic involution $\sigma_1$ on $Y_1$: the map $h\sigma_1 h^{-1}$ is a holomorphic involution on $Y_2$ with $6$ fixed points, and it follows from the uniqueness of the hyperelliptic involution, that $h\sigma_1 h^{-1} = \sigma_2$.
Next, consider the map $h\tau_1h^{-1}\in \Aut(Y_2)$. It descends to some involution $\tilde{\tau} \in \bar{\Aut(Y_2)}$. By Proposition \ref{Section 2 - Propositon: Overview of M_2}, we know the automorphism groups explicitly; in particular, $Y_i\in (\Surface\setminus U)\cup\{Q\}$ implies that there is only one conjugacy class of involutions in $\bar{\Aut(Y_i)}$. Therefore, the map $\tilde\tau$ is conjugate to $\bar\tau: z\mapsto -z$, the image of $\tau_2$ in $\bar{\Aut(Y_2)}$. So there exists some $\beta\in \bar{\Aut(Y_2)}$, such that $\beta\tilde\tau\beta^{-1} = \bar\tau$. The map $\beta$ has a lift $k\in \Aut(Y_2)$, and we set $h' = kh$. Then $\phi_2 h'$ is still a quotient map for $\sigma_1$, for which $\tau_1$ descends to $\bar\tau$, so Lemma \ref{Section 2 - Lemma: Choice of covering map and parameters} b) applies.
\end{proof}

It remains to study what happens if we restrict $\barpr : P/\Gamma \ra M_2$ to the curve $U$, resp. to $U\setminus \{Q\}$. A careful inspection leads to the following results (again we cite \cite{Gey74}).

\begin{prop}[see {\cite[Case 6]{Gey74}}]\label{Section 2 - Proposition: Where P mod Gamma to M_2 is not injective}
For a point $Y\in \Surface$, let $c(Y)$ be the number of conjugacy classes of involutions in $\bar{\Aut(Y)}$. Then $Y$ has precisely $c(Y)$ preimages in $P/\Gamma$. In particular, the map $\barpr: \barpr^{-1}(U\setminus \{Q\}) \ra M_2$ is 2 to 1. The two preimages of the point $Q'\in U$ are the $\Gamma$-orbits
\[\Gamma\cdot (e^{2i\pi/3},e^{i\pi/3})\quad \txt{\textit{and}}\quad \Gamma\cdot (-2+\sqrt{3}, -2-\sqrt{3}).\]

Every point in $P$ with nontrivial stabilizer in $\Gamma$ is in the $\Gamma$-orbit of a point on the curve $F = \set{(\lambda,\mu)\in P}{\lambda\mu = 1}$, and we have $\pr(F) = U$.
\end{prop}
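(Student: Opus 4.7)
I would prove the proposition in four stages, the first three refining the argument of Proposition~\ref{Section 2 - Proposition: Surface S in the moduli space}(c).

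\emph{Counting preimages.} Given two parameter points $(\lambda_i,\mu_i)\in P$ with $Y_1\cong Y_2=Y$ via some isomorphism $h$, uniqueness of the hyperelliptic involution again forces $h\sigma_1 h^{-1}=\sigma_2$, and so $h\tau_1 h^{-1}$ descends to an involution $\tilde\tau\in\overline{\Aut(Y_2)}$. If $\tilde\tau$ is $\overline{\Aut(Y_2)}$-conjugate to $\bar\tau_2$, I compose $h$ with a lift of the conjugator so as to land inside the hypothesis of Lemma~\ref{Section 2 - Lemma: Choice of covering map and parameters}(b), and thereby produce $\gamma\in\Gamma$ with $\gamma(\lambda_1,\mu_1)=(\lambda_2,\mu_2)$; conversely, if $\tilde\tau$ and $\bar\tau_2$ lie in different conjugacy classes in $\overline{\Aut(Y)}$, no such $\gamma$ can exist, since $\Gamma$-equivalence of parameters reflects exactly the replacements coming from $\Aut(Y)$. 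This yields a natural bijection between $\barpr^{-1}(Y)$ and the set of conjugacy classes of those involutions $\bar\tau\in\overline{\Aut(Y)}$ admitting a lift $\tau\in\Aut(Y)$ with $\FP(\tau)\cap\FP(\sigma)=\emptyset$, which is $c(Y)$.

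\emph{The 2-to-1 statement and the preimages of $Q'$.} To evaluate $c(Y)$ on $U\setminus\{Q\}$, I would analyze $\overline{\Aut(Y)}$ explicitly. For generic $Y\in U$ (where $\overline{\Aut(Y)}=V_4$), exactly two of the three involutions of $V_4\subset\Aut(\P^1)$ have both fixed points off the branch set $B$: in the parametrization with $B=\{\pm 1,\pm\lambda,\pm 1/\lambda\}$, the map $z\mapsto 1/z$ is fixed by $\pm 1\in B$, while the other two involutions are admissible. At $Q'$, where $\overline{\Aut(Q')}=D_6$ preserving (say) the sixth roots of unity, a similar check on the three $D_6$-conjugacy classes of involutions---the center and two classes of reflections---shows that exactly one class of reflections has its fixed points in $B$ (hence is inadmissible), while the center and the other reflection class remain admissible. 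The two explicit preimages are then produced by normalizing in turn with respect to each admissible conjugacy class, yielding $(e^{2i\pi/3},e^{i\pi/3})$ and $(-2+\sqrt{3},-2-\sqrt{3})$; their $\Gamma$-inequivalence follows from Stage~1, since they arise from distinct conjugacy classes of admissible involutions.

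\emph{Stabilizers and $\pr(F)=U$.} I would first verify that none of the generators $a,b,c,d,e$ has a fixed point in $P$ (each would force an excluded value $\lambda,\mu\in\{0,\pm 1\}$ or $\lambda=\pm\mu$), while the product $ab\colon(\lambda,\mu)\mapsto(\mu^{-1},\lambda^{-1})$ has fixed locus exactly $F$. A finite case analysis over the conjugacy classes of $\Gamma\cong V_4\rtimes D_6$ then shows that every $\gamma\in\Gamma$ with non-empty fixed set in $P$ is $\Gamma$-conjugate to one whose fixed set lies in $F$. For $\pr(F)=U$: on $F$, the Möbius map $z\mapsto 1/z$ preserves $B$ and commutes with $z\mapsto -z$, hence lifts to an additional involution of $C_{\lambda,1/\lambda}$ that enlarges $\overline{\Aut}$ to $V_4$, placing the image in $U$; conversely, any extra involution in $\overline{\Aut(Y)}$ commuting with $\bar\tau\colon z\mapsto -z$ must be of the form $z\mapsto c/z$, and branch-set invariance together with a suitable element of $\Gamma$ normalizes the parameter to lie on $F$.

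The main obstacle is the bookkeeping in the first two stages, especially at $Q'$: one must carefully sort the three conjugacy classes of $D_6$-involutions into admissible and inadmissible and match the two admissible classes to the two explicit $\Gamma$-orbits claimed. Everything else reduces to finite case analyses on $\Gamma$ and routine Möbius calculations on $\P^1$.
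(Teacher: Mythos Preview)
The paper does not prove this proposition at all; it simply cites \cite[Case 6]{Gey74} and moves on. So there is no ``paper's own proof'' to compare against---your sketch supplies what the paper deliberately omits.

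Your outline is essentially correct and follows the natural line suggested by Proposition~\ref{Section 2 - Proposition: Surface S in the moduli space}(c). Two remarks are worth making explicit. First, in Stage~1 you identify the fibre $\barpr^{-1}(Y)$ with the set of conjugacy classes of \emph{admissible} involutions (those with both fixed points off $B$) and then declare this to be $c(Y)$. Read literally, the number of conjugacy classes of involutions in $V_4$ or in $D_6$ is $3$, not $2$; the statement only comes out right because an involution $\bar\tau\in\overline{\Aut(Y)}$ with its two fixed points in $B$ does not lift to an involution of $Y$ at all---both lifts square to $\sigma$ and hence have order~$4$ (as one checks on $C_{\lambda,1/\lambda}$ for $z\mapsto 1/z$). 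Your Stage~2 computation uses this implicitly, but it is the key point reconciling your count with the paper's $c(Y)$, and you should say so. Second, the converse in Stage~1 (that $\Gamma$-equivalent parameters yield conjugate involutions) also wants a sentence: the generators $b,d,e$ leave $B$ unchanged, while $a$ and $c$ correspond to $\delta$'s of the form $z\mapsto z^{-1}$ and $z\mapsto \lambda^{-1}z$, both of which commute with $\bar\tau\colon z\mapsto -z$; hence the conjugacy class of $\bar\tau$ is constant along a $\Gamma$-orbit. With these two points pinned down, your argument goes through and is precisely the sort of verification Geyer carries out.
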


\subsection{Automorphisms of the affine curve}
Let $Y\in\Surface$, and let $C_{\lambda,\mu}$ be birational to $Y$. Again, we write $\phi:Y\ra\P^1$ for the covering coming from $(u,v)\mapsto u$ and $\{\id,\sigma,\tau,\sigma\tau\}$ for the automorphism group of a generic point $Y\in\Surface$. We now take a look at the automorphisms $\tau$ and $\sigma\tau$. Inspecting the Riemann-Hurwitz formula, we find that $\tau$ and $\sigma\tau$ both have two fixed points that form a $\gen{\sigma}$-orbit (since we can assume that $\FP(\tau)\cap\FP(\sigma\tau) = \emptyset$ by Lemma \ref{Section 2 - Lemma: wlog sigma and tau have no common fixed point}). The maps $\tau$ and $\sigma\tau$ induce the automorphisms
\[(u,v)\mapsto (-u,v) \text{ and } (u,v)\mapsto (-u,-v)\]
of $C_{\lambda,\mu}$. 

\begin{rem}\label{Section 2 - Remark: wlog tau is (u,v) mapsto (-u,v)}
Without loss of generality, the automorphism $\tau$ (resp. $\sigma\tau$) corresponds to $(u,v)\mapsto (-u,v)$ (resp. $(u,v)\mapsto (-u,-v)$) on $C_{\lambda,\mu}$.
\end{rem}

\begin{proof}
If this is not the case, then $\FP(\tau) = \phi^{-1}(\infty)$ and $\FP(\sigma\tau) = \phi^{-1}(0)$. Composing the covering $\phi$ with $z\mapsto z^{-1}$ leads to exchanging the fixed points of $\bar\tau$. But this in turn corresponds to replacing $(\lambda,\mu)$ with $(\lambda^{-1},\mu^{-1})$, which both lead to isomorphic curves by Proposition \ref{Section 2 - Proposition: Surface S in the moduli space}.
\end{proof}

In the following, we assume that $(\lambda,\mu)$ be chosen such that $\tau$ is given by $(u,v)\mapsto (-u,v)$.

\begin{cor}\label{Section 2 - Corollary: Fixed points of automorphisms}
The fixed points of $\tau$ correspond to the points $(0,i\lambda\mu)$, $(0,-i\lambda\mu) \in C_{\lambda,\mu}$ and the set of fixed points of $\sigma\tau$ is $\phi^{-1}(\infty) =\{\infty_1,\infty_2\}$.
\end{cor}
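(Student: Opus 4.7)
The plan is a direct coordinate computation on the smooth projective model of $C_{\lambda,\mu}$, carried out on the affine chart and then on the standard chart at infinity.

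On the affine chart $v^2 = (u^2-1)(u^2-\lambda^2)(u^2-\mu^2)$: for $\tau:(u,v)\mapsto(-u,v)$, a fixed point forces $u=0$, and substitution into the defining equation yields $v^2 = -\lambda^2\mu^2$, hence $v = \pm i\lambda\mu$. Since $(\lambda,\mu)\in P$ implies $\lambda\mu \neq 0$, these give two distinct points. By the Riemann-Hurwitz formula any non-hyperelliptic involution on a genus 2 surface has exactly two fixed points, so $\FP(\tau) = \{(0,\pm i\lambda\mu)\}$ and in particular $\tau$ has no fixed points at infinity. For $\sigma\tau:(u,v)\mapsto(-u,-v)$, an affine fixed point would require $u=v=0$, which contradicts $0 \neq -\lambda^2\mu^2$. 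Hence $\FP(\sigma\tau)$ lies entirely over $\infty\in\P^1$.

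To finish, I pass to the standard chart at infinity via $s = 1/u$, $t = v/u^3$; the defining equation transforms into
\[t^2 \;=\; 1 - (1+\lambda^2+\mu^2)s^2 + (\lambda^2+\mu^2+\lambda^2\mu^2)s^4 - \lambda^2\mu^2 s^6,\]
whose fiber over $s=0$ consists of the two distinct points $\infty_1 = (0,1)$ and $\infty_2 = (0,-1)$, which therefore make up $\phi^{-1}(\infty)$. In these coordinates $\sigma\tau$ becomes $(s,t)\mapsto(-s,t)$, which fixes both $\infty_1$ and $\infty_2$. (For comparison, $\tau$ becomes $(s,t)\mapsto(-s,-t)$, which swaps them, consistent with the affine count above.) Thus $\FP(\sigma\tau) = \phi^{-1}(\infty)$.

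The whole argument is a routine calculation; the only mildly delicate point is the change of coordinates at infinity, which is needed precisely to distinguish fixing from swapping the two points of $\phi^{-1}(\infty)$. No real obstacle arises beyond bookkeeping.
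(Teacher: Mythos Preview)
Your proof is correct and follows essentially the same line as the paper. The paper states this as an immediate corollary of the preceding discussion (the Riemann--Hurwitz count giving two fixed points for each of $\tau$ and $\sigma\tau$, together with the explicit description of $\tau$ as $(u,v)\mapsto(-u,v)$ from Remark~\ref{Section 2 - Remark: wlog tau is (u,v) mapsto (-u,v)}) and does not spell out the computation; you have simply written out the details, including the explicit chart at infinity, that the paper leaves implicit.
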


\begin{cor}\label{Section 2 - Corollary: Form of elliptic curve}
The quotient surface $\bar Y = Y/\gen{\tau}$ is the elliptic curve
\[y^2 = (x-1)(x-\lambda^2)(x-\mu^2)\]
with origin $N = \pi(\infty_1)=\pi(\infty_2)$. The quotient map $\pi:Y\ra \bar Y$ is given by $(u,v)\mapsto (x,y)=(u^2,v)$ on $C_{\lambda,\mu}$.
\end{cor}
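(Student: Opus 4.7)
The plan is to proceed at the level of function fields and then read off the explicit equation, using the affine model $C_{\lambda,\mu}$ together with the description of $\tau$ from Remark 2.11.

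First I would compute the $\tau$-invariant subfield of the function field $\C(Y) = \C(u,v)$ with relation $v^2 = (u^2-1)(u^2-\lambda^2)(u^2-\mu^2)$. Since $\tau$ acts by $(u,v)\mapsto(-u,v)$, its invariant ring on the coordinate ring is generated by $u^2$ and $v$. Setting $x = u^2$ and $y = v$, the defining relation $v^2 = (u^2-1)(u^2-\lambda^2)(u^2-\mu^2)$ becomes $y^2 = (x-1)(x-\lambda^2)(x-\mu^2)$, which is exactly the asserted equation for $\bar Y$. The quotient map is then $\pi:(u,v)\mapsto(u^2,v)=(x,y)$, as claimed. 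Because $(\lambda,\mu)\in P$ means $\lambda,\mu\ne 0,\pm 1$ and $\lambda\ne\pm\mu$, the three numbers $1,\lambda^2,\mu^2$ are pairwise distinct, so the cubic $(x-1)(x-\lambda^2)(x-\mu^2)$ has no multiple roots and the projective smooth completion of $\{y^2 = (x-1)(x-\lambda^2)(x-\mu^2)\}$ is a nonsingular curve of genus $1$.

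It remains to identify the origin $N$. By Corollary 2.9 the set $\FP(\sigma\tau)=\phi^{-1}(\infty)=\{\infty_1,\infty_2\}$, so both points at infinity of $Y$ are fixed by $\sigma\tau$; since $\sigma$ swaps $\infty_1$ and $\infty_2$ (the hyperelliptic involution exchanges the two sheets of $\phi$ over $\infty$) and $\sigma$ and $\tau$ commute, $\tau$ also swaps $\infty_1$ and $\infty_2$. Therefore $\pi(\infty_1)=\pi(\infty_2)$, and this common image is the unique point of the smooth completion of $\bar Y$ that lies over $x=\infty$ in the $x$-line, i.e.\ the point at infinity of the Weierstrass model. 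Declaring $N:=\pi(\infty_1)=\pi(\infty_2)$ to be the origin turns $\bar Y$ into the claimed elliptic curve. The one step requiring a touch of care is verifying that $\tau$ swaps $\infty_1$ and $\infty_2$ rather than fixing them individually, but this follows immediately from the commuting-involutions argument above (alternatively, from a local computation in the uniformizer $t=1/u$, where $v\sim\pm t^{-3}$ distinguishes the two branches and $\tau$ sends $t\mapsto -t$ while preserving $v$, hence flips the sign of $v/t^{-3}$).
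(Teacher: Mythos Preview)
Your proof is correct and is precisely the argument the paper leaves implicit: the corollary is stated without proof, as the substitution $x=u^2$, $y=v$ and the identification of the image of $\{\infty_1,\infty_2\}$ are immediate from the preceding Remark and Corollary. Your verification that $\tau$ swaps $\infty_1$ and $\infty_2$ via the commuting-involutions argument (or equivalently, since $\FP(\tau)=\{(0,\pm i\lambda\mu)\}$ already exhausts the fixed points of $\tau$ and $\bar\tau$ fixes $\infty$) is exactly the missing detail one would supply.
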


\section{Automorphism groups of translation surfaces in genus 2}
Recall from Section \ref{Section 1 - Translation surfaces}, that a compact Riemann surface $X$ together with a non-zero holomorphic 1-form $\omega$ on $X$ defines a translation surface $(X,\omega)$. In this section, we answer the question how the automorphism group $\Aut(X,\omega)$ can possibly look like, if $X$ has genus 2.

\begin{prop}\label{Section 3 - Proposition: Automorphisms of translation surfaces in g=2}
Let $(X,\omega)$ be a translation surface of genus 2. Then either 
\[\Aut(X,\omega) =\{\id, \sigma\} \isom \Z/2\Z\quad \text{or} \quad \Aut(X,\omega) = \{\id,\sigma,\tau,\sigma\tau\}\isom V_4,\]
where $\sigma$ is the hyperelliptic involution on $X$ and $\tau$ is a translation of order 2. Moreover, if $(X,\omega)$ is in the stratum $\Stratum(2)$, then only the first case is possible.
\end{prop}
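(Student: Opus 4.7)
The strategy is to decompose $\Aut(X,\omega) = \Trans(X,\omega) \sqcup \sigma \cdot \Trans(X,\omega)$ and then bound $|\Trans(X,\omega)|$. In genus $2$ the hyperelliptic involution $\sigma$ acts as $-\id$ on $H^0(X,\Omega^1)$, so $\sigma^*\omega = -\omega$ and $\der(\sigma) = -I$; hence $\sigma \in \Aut(X,\omega)$. For any $f \in \Aut(X,\omega)$ with $\der(f) = -I$, the composition $\sigma f$ has $\der(\sigma f) = I$ and lies in $\Trans(X,\omega)$, which gives the decomposition. Since $\sigma$ is central in $\Aut(X)$ and is not itself a translation, $\Aut(X,\omega) \isom \Trans(X,\omega) \times \Z/2\Z$, and it suffices to show $|\Trans(X,\omega)| \leq 2$, with $\Trans(X,\omega) = \{\id\}$ in $\Stratum(2)$.

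A non-identity translation $\tau$ acts freely on $X \setminus Z(\omega)$: a fixed point in a translation chart would force $\tau(z) = z + 0 = z$, hence $\tau = \id$ by holomorphic continuation. At a fixed zero $P$ of multiplicity $d$, in a complex coordinate $z$ with $\omega = z^d\,dz$, the relation $\tau^*\omega = \omega$ reads $\tau(z)^d \tau'(z) = z^d$; integrating with $\tau(0) = 0$ gives $\tau(z)^{d+1} = z^{d+1}$, so $\tau(z) = \zeta z$ with $\zeta^{d+1} = 1$.

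For $\omega \in \Stratum(2)$, the unique zero $P$ has $d = 2$ and $\zeta^3 = 1$. The only non-trivial possibility is $\tau$ of order $3$ with $P$ as its unique fixed point, in which case $X \to X/\gen{\tau}$ is a $\Z/3\Z$-Galois cover of an elliptic curve $Y$ ramified only over $\bar P$ with index $3$. Such a cover corresponds to a surjection $\pi_1(Y \setminus \{\bar P\}) = \gen{a,b,c \mid c = [a,b]} \twoheadrightarrow \Z/3\Z$ sending the small loop $c$ to a generator, which is impossible because the abelian target kills the commutator $c$. Therefore $\Trans(X,\omega) = \{\id\}$ and $\Aut(X,\omega) = \{\id,\sigma\}$ in this stratum.

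For $\omega \in \Stratum(1,1)$ with zeros $P_1, P_2$, a non-trivial $\tau$ either fixes both $P_i$ or swaps them. In the fixing case, $\zeta^2 = 1$ at each $P_i$ and $\zeta = 1$ would force $\tau = \id$; so $\zeta = -1$ at both, $\tau^2 = \id$, and $\tau$ has order $2$. In the swapping case $\tau$ has no fixed points, so $\#\FP(\tau) = 0$. If $\tau$ were an involution, its action on the faithful $\Aut(X)$-module $H^0(X,\Omega^1)$ would have eigenvalues $(1,-1)$ (preserving $\omega$ yet non-identity), so Lefschetz would give $\#\FP(\tau) = 2 - 2\Re\,\txt{tr}(\tau^*|H^0) = 2$, a contradiction. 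If $\tau$ had order $4$, then $X \to X/\gen{\tau}$ would produce a $\Z/4\Z$-Galois cover of the elliptic curve $X/\gen{\tau}$ branched at one point with index $2$, excluded by exactly the same commutator obstruction as above. Hence every non-trivial $\tau \in \Trans(X,\omega)$ fixes both zeros and is locally $z \mapsto -z$ at $P_1$ in an $\omega$-adapted coordinate; as this local form uniquely determines $\tau$ on $X$, we obtain $|\Trans(X,\omega)| \leq 2$. Combining these steps yields $\Aut(X,\omega) \in \{\{\id,\sigma\}, V_4\}$ as claimed. The main technical delicacy is the $\pi_1$-obstruction argument; its essential input is that the small loop around a puncture of a once-punctured elliptic curve is a commutator in the fundamental group, hence trivial in any abelian quotient.
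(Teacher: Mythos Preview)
Your argument is correct and proceeds quite differently from the paper's. Both reduce to bounding $|\Trans(X,\omega)|$ via the short exact sequence $1 \to \Trans(X,\omega) \to \Aut(X,\omega) \to \{\pm I\} \to 1$, but then diverge. For $\Stratum(1,1)$ the paper treats the case where $\tau$ swaps the two zeros by a direct Riemann--Hurwitz count (the equation $2 = \ord(\tau)(2g'-2)$ for an unramified cover admits no integral solution with $g'\in\{0,1,2\}$), and the case where $\tau$ fixes both zeros by a flat-geometry argument: since the cone angle at each zero is $4\pi$, exactly two geodesics leave in any given direction, a translation fixing the zero permutes this pair, and the identity theorem forces $\tau^2 = \id$ with $\tau$ unique. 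The $\Stratum(2)$ case is not argued but cited from \cite{HL06}. You instead work with the local model $\tau(z)=\zeta z$, $\zeta^{d+1}=1$, at each zero, then invoke the Lefschetz fixed-point formula to rule out a fixed-point-free involution and the commutator obstruction in $\pi_1$ of a once-punctured torus to rule out the cyclic covers of orders $3$ and $4$. This buys you a fully self-contained proof, including for $\Stratum(2)$, at the cost of slightly heavier machinery than the paper's geodesic count.

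One expository omission: in the swapping case you only discuss $\ord(\tau)\in\{2,4\}$. The missing line is that $\tau^2$ fixes both zeros, so by your own fixing-case analysis $\tau^2$ is either the identity or the unique order-$2$ translation, whence $\ord(\tau)\mid 4$. It would also be worth one sentence of Riemann--Hurwitz to confirm that $X/\gen{\tau}$ really has genus $1$ in the order-$4$ case before invoking the $\pi_1$ obstruction.
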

\begin{proof} First note that the hyperelliptic involution $\sigma$ is a biholomorphic map that lives on the whole Teichmüller disk to $(X,\omega)$. So by Proposition \ref{Section 1 - Proposition: common automorphisms of TM-disk are affine}, it is affine with derivative $-I$. Indeed, $\sigma$ cannot be a translation, for it has 6 fixed points, and thus cannot act freely on the translation surface $(X,\omega)$. One has an exact sequence
\[1\rTo \Trans(X,\omega) \rTo \Aut(X,\omega) \rTo^{\der} \{\pm I\} \rTo 1,\]
thus it suffices to determine $\Trans(X,\omega)$ to prove the claim. To this end, we distinguish two cases. First, let $(X,\omega)$ be in the stratum $\Stratum(1,1)$ and let $\tau$ be a translation. Then $\tau$ is a fortiori a biholomorphic automorphism, thus it has finite order. Moreover, $\tau$ permutes the two zeros $P$, $P'$ of $\omega$. We look at the quotient surface $X/\gen{\tau}$. Suppose that $\tau$ does not fix $P$ and $P'$. Then it has no fixed point in $X$ (since it is a translation on $X\setminus \{P,P'\}$). By Riemann-Hurwitz, $g' = g(X/\gen{\tau}) \leq g(X) = 2$ and
\[2g(X) - 2 = 2 = \ord(\tau)(2g' -2).\]
Each of the cases $g' \in \{0,1,2\}$ leads to a contradiction. So $\FP(\tau) = \{P, P'\}$. Let $\gamma$ be a geodesic for the translation structure on $X$ that starts from a singularity, say $P$, in direction $v$. Since the cone angle around $P$ is equal to $4\pi$, there are precisely two such geodesics $\gamma_1$, $\gamma_2$. A translation $\tau$ that fixes $P$ permutes $\gamma_1$ and $\gamma_2$. By the identity theorem, $\tau = \id$ or $\tau^2 = \id$, and this determines $\tau$ uniquely.

The case $(X,\omega) \in \Stratum(2)$ is treated in \cite[Proposition 4.4]{HL06}.
\end{proof}


\section{The origami $S$}
In this section, we study the origami $S$ and its origami curve $\Curve_S$ in the moduli space. Step by step, we prove the assertions of Theorem \ref{Introduction - Thm - Main Result}.

\subsection{Automorphisms of $S$}
We write $S=(p:X\ra E)$ for the origami covering. Let $X_I = (X,p^*\omega_I)$ be the square-tiled translation surface defined by $S$. We determine the group $\Aut(X_I)$ of common automorphisms of points in the Teichmüller disk $\Delta_S = \Delta(X,(p^*\omega_I)^2))$.

Observe that Figures \ref{Figure: sigma} and \ref{Figure: tau} define two affine diffeomorphisms of $X_I$, which we call $\sigma$ and $\tau$. We have $\der(\sigma) = -I$ and $\der(\tau) = I$, so in particular $\tau$ is a translation. It also follows from the pictures that $\sigma$ and $\tau$ are both of order 2. Their product $\sigma\tau$ (which is the same as $\tau\sigma$) is depicted in Figure \ref{Figure: sigmatau}.

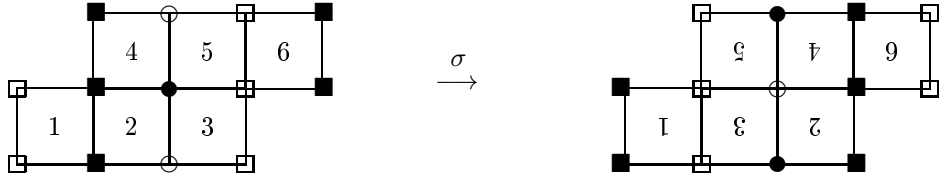
\begin{figure}[ht]
\setlength{\unitlength}{1cm}
\begin{picture}(12,2.5)
\put(5.5,1){$\longrightarrow$}
\put(5.7,1.3){$\sigma$}

\put(0,0){\framebox(1,1){1}}
\put(1,0){\framebox(1,1){2}}
\put(2,0){\framebox(1,1){3}}
\put(1,1){\framebox(1,1){4}}
\put(2,1){\framebox(1,1){5}}
\put(3,1){\framebox(1,1){6}}

\put(-.1,-0.1){\framebox(.2,.2)}
\put(-.1,0.9){\framebox(.2,.2)}
\put(2.9,-0.1){\framebox(.2,.2)}
\put(2.9,0.9){\framebox(.2,.2)}
\put(2.9,1.9){\framebox(.2,.2)}

\put(.9,-0.1){$\blacksquare$}
\put(.9,0.9){$\blacksquare$}
\put(.9,1.9){$\blacksquare$}
\put(3.9,0.9){$\blacksquare$}
\put(3.9,1.9){$\blacksquare$}

\put(2,0){\circle{0.2}}
\put(2,1){\circle*{0.2}}
\put(2,2){\circle{0.2}}

\put(8,0){\framebox(1,1){\begin{turn}{180}  1 \end{turn}}}
\put(9,0){\framebox(1,1){\begin{turn}{180}  3 \end{turn}}}
\put(10,0){\framebox(1,1){\begin{turn}{180} 2 \end{turn}}}
\put(9,1){\framebox(1,1){\begin{turn}{180}  5 \end{turn}}}
\put(10,1){\framebox(1,1){\begin{turn}{180} 4 \end{turn}}}
\put(11,1){\framebox(1,1){\begin{turn}{180} 6 \end{turn}}}

\put(7.8,-0.1){$\blacksquare$}
\put(7.8,0.9){$\blacksquare$}
\put(10.9,-0.1){$\blacksquare$}
\put(10.9,0.9){$\blacksquare$}
\put(10.9,1.9){$\blacksquare$}

\put(8.9,-0.1){\framebox(.2,.2)}
\put(8.9,0.9){\framebox(.2,.2)}
\put(8.9,1.9){\framebox(.2,.2)}
\put(11.9,0.9){\framebox(.2,.2)}
\put(11.9,1.9){\framebox(.2,.2)}

\put(10,0){\circle*{0.2}}
\put(10,1){\circle{0.2}}
\put(10,2){\circle*{0.2}}
\end{picture}
\caption{The hyperelliptic involution $\sigma$ on the origami $S$\label{Figure: sigma}}
\end{figure}

\begin{figure}[ht]
\setlength{\unitlength}{1cm}
\begin{picture}(12,2.5)
\put(5.5,1){$\longrightarrow$}
\put(5.7,1.3){$\tau$}

\put(0,0){\framebox(1,1){1}}
\put(1,0){\framebox(1,1){2}}
\put(2,0){\framebox(1,1){3}}
\put(1,1){\framebox(1,1){4}}
\put(2,1){\framebox(1,1){5}}
\put(3,1){\framebox(1,1){6}}

\put(-.1,-0.1){\framebox(.2,.2)}
\put(-.1,0.9){\framebox(.2,.2)}
\put(2.9,-0.1){\framebox(.2,.2)}
\put(2.9,0.9){\framebox(.2,.2)}
\put(2.9,1.9){\framebox(.2,.2)}

\put(.9,-0.1){$\blacksquare$}
\put(.9,0.9){$\blacksquare$}
\put(.9,1.9){$\blacksquare$}
\put(3.9,0.9){$\blacksquare$}
\put(3.9,1.9){$\blacksquare$}

\put(2,0){\circle{0.2}}
\put(2,1){\circle*{0.2}}
\put(2,2){\circle{0.2}}

\put(8,0){\framebox(1,1){6}}
\put(9,0){\framebox(1,1){4}}
\put(10,0){\framebox(1,1){5}}
\put(9,1){\framebox(1,1){2}}
\put(10,1){\framebox(1,1){3}}
\put(11,1){\framebox(1,1){1}}

\put(7.9,-0.1){\framebox(.2,.2)}
\put(7.9,0.9){\framebox(.2,.2)}
\put(10.9,-0.1){\framebox(.2,.2)}
\put(10.9,0.9){\framebox(.2,.2)}
\put(10.9,1.9){\framebox(.2,.2)}

\put(8.9,-0.1){$\blacksquare$}
\put(8.9,0.9){$\blacksquare$}
\put(8.9,1.9){$\blacksquare$}
\put(11.9,0.9){$\blacksquare$}
\put(11.9,1.9){$\blacksquare$}

\put(10,0){\circle*{0.2}}
\put(10,1){\circle{0.2}}
\put(10,2){\circle*{0.2}}
\end{picture}
\caption{The translation $\tau$ on the origami $S$\label{Figure: tau}}
\end{figure}
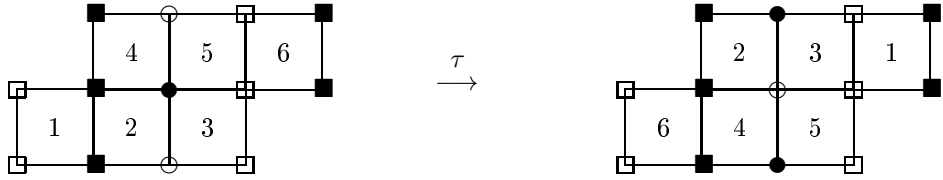

\begin{figure}[ht]
\setlength{\unitlength}{1cm}
\begin{picture}(12,2.5)
\put(5.5,1){$\longrightarrow$}
\put(5.6,1.3){$\sigma\tau$}

\put(0,0){\framebox(1,1){\upshape 1}}
\put(1,0){\framebox(1,1){\upshape 2}}
\put(2,0){\framebox(1,1){\upshape 3}}
\put(1,1){\framebox(1,1){\upshape 4}}
\put(2,1){\framebox(1,1){\upshape 5}}
\put(3,1){\framebox(1,1){\upshape 6}}

\put(-.1,-0.1){\framebox(.2,.2)}
\put(-.1,0.9){\framebox(.2,.2)}
\put(2.9,-0.1){\framebox(.2,.2)}
\put(2.9,0.9){\framebox(.2,.2)}
\put(2.9,1.9){\framebox(.2,.2)}

\put(.9,-0.1){$\blacksquare$}
\put(.9,0.9){$\blacksquare$}
\put(.9,1.9){$\blacksquare$}
\put(3.9,0.9){$\blacksquare$}
\put(3.9,1.9){$\blacksquare$}

\put(2,0){\circle{0.2}}
\put(2,1){\circle*{0.2}}
\put(2,2){\circle{0.2}}

\put(8,0){\framebox(1,1){\begin{turn}{180}\upshape  6 \end{turn}}}
\put(9,0){\framebox(1,1){\begin{turn}{180}\upshape  5 \end{turn}}}
\put(10,0){\framebox(1,1){\begin{turn}{180}\upshape 4 \end{turn}}}
\put(9,1){\framebox(1,1){\begin{turn}{180}\upshape  3 \end{turn}}}
\put(10,1){\framebox(1,1){\begin{turn}{180}\upshape 2 \end{turn}}}
\put(11,1){\framebox(1,1){\begin{turn}{180}\upshape 1 \end{turn}}}

\put( 7.8,-0.1){$\blacksquare$}
\put( 7.8, 0.9){$\blacksquare$}
\put(10.9,-0.1){$\blacksquare$}
\put(10.9, 0.9){$\blacksquare$}
\put(10.9, 1.9){$\blacksquare$}

\put( 8.9,-0.1){\framebox(.2,.2)}
\put( 8.9, 0.9){\framebox(.2,.2)}
\put( 8.9, 1.9){\framebox(.2,.2)}
\put(11.9, 0.9){\framebox(.2,.2)}
\put(11.9, 1.9){\framebox(.2,.2)}

\put(10,0){\circle{0.2}}
\put(10,1){\circle*{0.2}}
\put(10,2){\circle{0.2}}
\end{picture}
\caption{The map $\sigma\tau$ on the origami $S$\label{Figure: sigmatau}}
\end{figure}
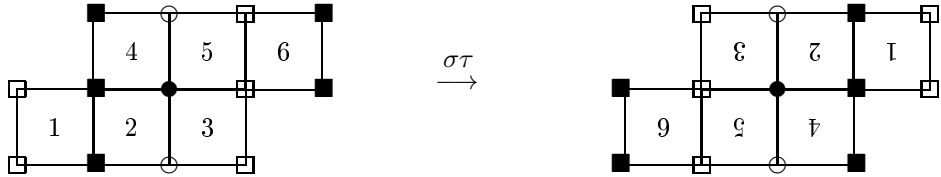

\begin{prop}\label{Section 4 - Proposition: Aut(X,omega) for S}
The group $\Aut(X_I)$ is given by $\{\id,\sigma,\tau,\sigma\tau\} \isom V_4$. The map $\sigma$ is the hyperelliptic involution and we have $\Trans(X_I) = \{\id,\tau\}$.
\end{prop}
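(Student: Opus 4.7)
The plan is to combine the explicit constructions in Figures \ref{Figure: sigma}--\ref{Figure: sigmatau} with the classification result Proposition \ref{Section 3 - Proposition: Automorphisms of translation surfaces in g=2}, so that almost no new work is needed beyond a bookkeeping check of the figures.

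First I would verify that the recipes in Figures \ref{Figure: sigma}--\ref{Figure: sigmatau} really define affine diffeomorphisms of $X_I$. Concretely, for each of $\sigma$, $\tau$, $\sigma\tau$ one checks that the prescribed permutation of the six squares, combined with the indicated geometric action on a single square (rotation by $\pi$ in the cases of $\sigma$ and $\sigma\tau$, the identity in the case of $\tau$), is compatible with all six edge identifications $a,b,c,d,e,f$ and sends vertex classes to vertex classes. Reading off the global linear parts gives $\der(\sigma) = -I$ and $\der(\tau) = I$, both maps square to the identity, and a direct inspection shows $\sigma\tau = \tau\sigma$; hence $\{\id,\sigma,\tau,\sigma\tau\}\isom V_4$ sits inside $\Aut(X_I) = \der^{-1}(\{\pm I\})$.

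Next, I would observe that $X_I$ lies in the stratum $\Stratum(1,1)$. Tracing a small loop around each of the four vertex classes of the square-tiling, one finds cone angles $4\pi$ at $\square$ and $\blacksquare$ and cone angles $2\pi$ at $\tcircle$ and $\tcircleblack$, so $p^*\omega_I$ has exactly two simple zeros. Proposition \ref{Section 3 - Proposition: Automorphisms of translation surfaces in g=2} therefore reduces $\Aut(X_I)$ to one of exactly two possibilities, $\Z/2\Z$ or $V_4$, and since we have already produced a copy of $V_4$ inside $\Aut(X_I)$, the larger alternative must hold, giving $\Aut(X_I) = \{\id,\sigma,\tau,\sigma\tau\}$.

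Finally, Proposition \ref{Section 3 - Proposition: Automorphisms of translation surfaces in g=2} itself supplies the remaining identifications: the hyperelliptic involution is the unique element of $\Aut(X,\omega)$ with derivative $-I$ and hence coincides with $\sigma$, while the nontrivial translation of order $2$ is $\tau$. The translation subgroup is then just the kernel of the derivative on $\Aut(X_I)$, so $\Trans(X_I) = \{\id,\tau\}$. The only step that requires any genuine effort is the combinatorial verification that the recipes in the figures really respect the edge and vertex data; everything structural is handed to us by Section~3.
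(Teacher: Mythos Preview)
Your overall strategy coincides with the paper's: exhibit $\sigma$ and $\tau$ explicitly from the figures, observe that $(X_I,\omega)\in\Stratum(1,1)$, and then invoke Proposition~\ref{Section 3 - Proposition: Automorphisms of translation surfaces in g=2} to conclude that $\Aut(X_I)$ cannot be larger than $V_4$ and that $\Trans(X_I)=\{\id,\tau\}$. That part is fine and essentially identical to what the paper does.

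There is, however, a genuine gap in your final paragraph. You claim that Proposition~\ref{Section 3 - Proposition: Automorphisms of translation surfaces in g=2} identifies the hyperelliptic involution as ``the unique element of $\Aut(X,\omega)$ with derivative $-I$'', and deduce that it must be $\sigma$. But this is false in the $V_4$ case: both $\sigma$ and $\sigma\tau$ have derivative $-I$, so the derivative alone does not single out which of the two is the hyperelliptic involution. Proposition~\ref{Section 3 - Proposition: Automorphisms of translation surfaces in g=2} only tells you abstractly that one of the two elements in $\der^{-1}(-I)$ is the hyperelliptic involution; it does not tell you which concrete map from Figures~\ref{Figure: sigma} and~\ref{Figure: sigmatau} it is. Since the later arguments in Section~4 rely on knowing that the specific map $\sigma$ (and not $\sigma\tau$) is hyperelliptic --- in particular for locating the fixed points of $\tau$ and $\sigma\tau$ correctly relative to the Weierstrass points --- this distinction matters.

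The paper closes this gap differently: it reads off from Figure~\ref{Figure: fixed points of sigma} that $\sigma$ has six fixed points, and then uses the uniqueness of the hyperelliptic involution (the only involution on a genus~$2$ surface with six fixed points) to conclude $\sigma$ is it. You should replace your derivative argument with this fixed-point count, or with any other argument that actually distinguishes $\sigma$ from $\sigma\tau$.
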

\begin{proof}
Clearly, $\Aut(X_I)$ contains these elements; moreover, by Proposition \ref{Section 3 - Proposition: Automorphisms of translation surfaces in g=2}, $\Aut(X_I)$ cannot be bigger, and $\tau$ is the non-trivial translation. Furthermore, observe that the map $\sigma$ is an involution having the six fixed points as indicated in Figure \ref{Figure: fixed points of sigma}. So $\sigma$ is the hyperelliptic involution (since the latter is unique).
\end{proof}

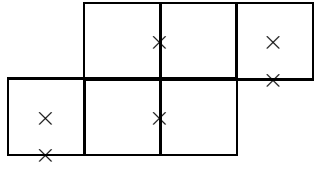
\begin{figure}[ht]
\setlength{\unitlength}{1cm}
\begin{picture}(4,2)
\put(0,0){\framebox(1,1){}}
\put(1,0){\framebox(1,1){}}
\put(2,0){\framebox(1,1){}}
\put(1,1){\framebox(1,1){}}
\put(2,1){\framebox(1,1){}}
\put(3,1){\framebox(1,1){}}

\put(0.35,0.4){$\times$}
\put(0.35,-0.1){$\times$}
\put(1.85,0.4){$\times$}

\put(3.35,1.4){$\times$}
\put(3.35,0.9){$\times$}
\put(1.85,1.4){$\times$}
\end{picture}
\caption{Fixed points of $\sigma$\label{Figure: fixed points of sigma}}
\end{figure}

Note that the points $\square$ and $\blacksquare$ are singularities on the translation surface $X_I$. Consequently, $X_I$ is in the stratum $\Stratum(1,1)$. Next, we identify the fixed points of the remaining automorphisms.

\begin{rem}
The fixed points of the translation $\tau$ are $\square$ and $\blacksquare$. The fixed points of the map $\sigma\tau$ are $\tcircle$ and $\tcircleblack$. 
\end{rem}

\subsection{An equation for the points on $\Curve_S$}
By Proposition \ref{Section 4 - Proposition: Aut(X,omega) for S} and Proposition \ref{Section 2 - Proposition: Form of curves with Property star}, we know that every surface on the Teichmüller disk associated to $S$ corresponds to a curve $C_{\lambda,\mu}$ for some $(\lambda, \mu) \in P$. Our next task is to find an algebraic relation between $\lambda$ and $\mu$ that describes the points in $M_2$ that lie on the origami curve $\Curve_S$. 

The map $\tau:X\ra X$ is a deck transformation for $S = (p:X\ra E)$. Let $\pi:X\ra X/\gen{\tau}$ denote the quotient map. The map $p$ factors as $\bar p\circ \pi$, where $\bar S = (\bar p: X/\gen{\tau} \ra E)$ is an origami of genus one. A picture of $\bar S$ is drawn in Figure~\ref{Figure: Origami S_quer}. Gluings are made by identifying opposite sides and the numbers indicate which squares of $S$ are identified by the action of $\tau$. The points $\lozenge =\pi(\square)$, $\blacklozenge = \pi(\blacksquare)$ are the images of the fixed points of $\tau$ and $@$ is the image of $\{\tcircle,\tcircleblack\}$ under $\pi$.

\begin{figure}[th]
\setlength{\unitlength}{1.5cm}
\begin{center}
\begin{picture}(3,1)
\put(0,0){\framebox(1,1){3(5)}}
\put(1,0){\framebox(1,1){1(6)}}
\put(2,0){\framebox(1,1){2(4)}}

\put(0.92,-.08){$\lozenge$}
\put(0.92,.92){$\lozenge$}

\put(1.92,-.09){$\blacklozenge$}
\put(1.92,.91){$\blacklozenge$}

\put(-.1,-.1){$@$}
\put(-.1,.9){$@$}
\put(2.9,-.1){$@$}
\put(2.9,.9){$@$}
\end{picture}\\[1ex]
\end{center}
\caption{The origami $\bar S$ \label{Figure: Origami S_quer}}
\end{figure}
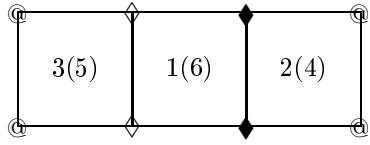


Let $(X,\omega)$ be a point on the Teichmüller disk $\Delta_S$ of the origami $S$. The differential $\omega = p^*\omega_A$ is the pullback of the differential $\omega_A$ on the complex torus $E_A$ for some $A\in\SL_2(\R)$. We write $X_A$ for the Riemann surface defined by $(X,\omega) = (X,p^*\omega_A)$. Then $\bar X_A = X_A/\gen{\tau}$ is an elliptic curve, where we choose the origin $@$.

It is clear from Figure \ref{Figure: Origami S_quer} that we can identify $(\bar X_A,@)$ with the elliptic curve $E_{AB}$, where $B = \textmatrix{3}{0}{0}{1}$ (we assume that $E_{AB}$ is equipped with the group structure that descends from $\C$). In this way, the points $\lozenge$ and $\blacklozenge$ correspond to the points $1+\Lambda_{AB}$, respectively $2+\Lambda_{AB}$. 

\begin{rem}\label{Section 4 - Remark: we have 3-torsion points}
The points $\lozenge$ and $\blacklozenge$ are 3-torsion points of the elliptic curve $(\bar X_A,@)$ and their sum equals $@$.
\end{rem}





With these considerations in mind, we can prove the first part of Theorem \ref{Introduction - Thm - Main Result}. 

\begin{prop}\label{Section 4 - Proposition: Equation of Curve_S}
The origami curve $\Curve_S$ of the origami $S$ is equal to the projection of the affine curve $V\subset \C^2$ to the moduli space $M_2$, where
\[V:\quad \mu(\lambda +1) - \lambda = 0,\quad \lambda \neq 0, \pm 1, -\tfrac{1}{2}, -2.\]
Explicitly, every point on $\Curve_S$ is birational as affine plane curve to
$$y^2 = (x^2-1)(x^2-\lambda^2)(x^2-\bigl(\tfrac{\lambda}{\lambda+1}\bigr)^2)$$
for some $\lambda \in \C\setminus \{0,\pm 1, -\tfrac{1}{2},-2\}$.
\end{prop}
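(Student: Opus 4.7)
The plan is to translate Remark~\ref{Section 4 - Remark: we have 3-torsion points}, that the images $\lozenge, \blacklozenge$ of the $\tau$-fixed points are $3$-torsion on the elliptic curve $\bar X_A$, into an explicit polynomial condition on the parameters $(\lambda, \mu)$. By Proposition~\ref{Section 4 - Proposition: Aut(X,omega) for S} combined with Proposition~\ref{Section 2 - Proposition: Form of curves with Property star}, every surface on the Teichmüller disk $\Delta_S$ is isomorphic to some $C_{\lambda, \mu}$ with $(\lambda, \mu) \in P$, so the origami curve has the form $\Curve_S = \barpr(W)$ for some $\Gamma$-invariant closed subvariety $W \subseteq P$ that we need to identify.

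To set up the algebra, combine Corollary~\ref{Section 2 - Corollary: Fixed points of automorphisms} and Corollary~\ref{Section 2 - Corollary: Form of elliptic curve}: the fixed points $(0, \pm i\lambda\mu)$ of $\tau$ on $C_{\lambda,\mu}$ map under $\pi:(u, v) \mapsto (u^2, v)$ to the points $P_0 = (0, i\lambda\mu)$ and $-P_0 = (0, -i\lambda\mu)$ on the elliptic curve
$$\bar Y:\quad y^2 = f(x), \qquad f(x) = (x - 1)(x - \lambda^2)(x - \mu^2),$$
with origin $N = \pi(\infty_1)$. Since $P_0$ and $-P_0$ automatically sum to $N$ (the line $x=0$ being vertical), the substantive content of Remark~\ref{Section 4 - Remark: we have 3-torsion points} is that $P_0$ be a $3$-torsion point on $\bar Y$.

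This reduces to a standard chord-tangent calculation. The tangent to $\bar Y$ at $P_0$ has slope $m = f'(0)/(2 i \lambda \mu)$ with $f'(0) = \lambda^2 + \mu^2 + \lambda^2 \mu^2$; substituting into $y^2 = f(x)$ produces a cubic with double root $0$ and third root $x(2 P_0) = m^2 + 1 + \lambda^2 + \mu^2$. The condition $2 P_0 = -P_0$ becomes $x(2P_0) = 0$, which after clearing denominators reads
$$(\lambda^2 + \mu^2 + \lambda^2 \mu^2)^2 = 4 \lambda^2 \mu^2 (1 + \lambda^2 + \mu^2).$$
Rewriting the difference as $\bigl((\lambda^2 + \mu^2) - (\lambda\mu)^2\bigr)^2 - (2\lambda\mu)^2$ and applying the difference of squares twice factors this cleanly into
$$(\lambda\mu - \lambda + \mu)(\lambda\mu + \lambda - \mu)(\lambda\mu - \lambda - \mu)(\lambda\mu + \lambda + \mu) = 0.$$
A direct check on the generators shows that the subgroup $\gen{d, e} \cong V_4$ of $\Gamma$ permutes these four linear factors transitively (for instance $e$ sends the first to the second, $d$ the first to the third). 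Hence all four define the same irreducible curve in $\Surface$, and we may take $\lambda\mu - \lambda + \mu = 0$, \ie $\mu(\lambda + 1) = \lambda$, as the representative, giving the equation of $V$.

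Translating the conditions defining $P$ under the substitution $\mu = \lambda/(\lambda + 1)$ yields precisely the exclusions $\lambda \notin \{0, \pm 1, -\tfrac{1}{2}, -2\}$ (where $-\tfrac{1}{2}$ comes from $\mu \neq -1$ and $-2$ from $\mu \neq -\lambda$). This proves $\Curve_S \subseteq \barpr(V)$; as both sides are irreducible $1$-dimensional quasi-projective curves in $M_2$, equality follows. The birational model $y^2 = (x^2 - 1)(x^2 - \lambda^2)\bigl(x^2 - (\lambda/(\lambda+1))^2\bigr)$ is then immediate from Proposition~\ref{Section 2 - Proposition: Form of curves with Property star}. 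I expect the main obstacle to be the factorization of the $3$-torsion polynomial together with the bookkeeping of the $\Gamma$-action: one has to verify that a single one of the four linear factors already cuts out the full origami curve in $\Surface$, rather than only a quarter of it.
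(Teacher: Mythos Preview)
Your proof is correct and follows essentially the same route as the paper: both compute the $x$-coordinate of $[2]P_0$ on the elliptic quotient, set it to zero, factor the resulting polynomial into four linear pieces, and use the $\langle d,e\rangle$-action to reduce to a single component, then conclude by irreducibility. Your repeated difference-of-squares factorization is a bit slicker than the paper's case split, and you correctly observe that the $y$-coordinate check (which the paper carries out) is redundant once $x([2]P_0)=0$ and $P_0\neq N$.
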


\begin{proof}
As seen above, every point on $\Curve_S$ is represented by a translation surface $X_A$ for some $A\in \SL_2(\R)$. By Proposition \ref{Section 4 - Proposition: Aut(X,omega) for S}, we have $\Curve_S\subset \Surface$, so it follows from Proposition \ref{Section 2 - Proposition: Form of curves with Property star} that there is a covering map $\phi:X_A\ra \P^1$, ramified over $B=\{1,-1,\lambda,-\lambda,\mu,-\mu\}$ for some parameters $(\lambda,\mu)\in P$ such that $X_A$ is birational to the affine plane curve $C_{\lambda,\mu}$. By Remark \ref{Section 2 - Remark: wlog tau is (u,v) mapsto (-u,v)}, we can assume that $(\lambda,\mu)\in P$ are chosen such that $\tau\in \Aut(X_A)$ corresponds to the morphism $(u,v)\mapsto (-u,v)$ of $C_{\lambda,\mu}$. By Corollary \ref{Section 2 - Corollary: Form of elliptic curve}, $(\bar X_A,@)$ is isomorphic to
\[y^2 = (x-1)(x-\lambda^2)(x-\mu^2)\]
with origin at infinity, and Corollary \ref{Section 2 - Corollary: Fixed points of automorphisms} implies that
\[\{P_1 = (0,i\lambda\mu),P_2 = (0,-i\lambda\mu)\} = \{\lozenge, \blacklozenge\},\]
since they are the images of the fixed points of $\tau$. We use the addition formula for points on an elliptic curve to derive a relation between $\lambda$ and $\mu$ (see \cite[III.2.3]{Sil92}). $(\bar X_A,@)$ has the data
\begin{align*}
a_1 &= 0 & a_3 &= 0\\
a_2 &= -1-\lambda^2-\mu^2 & a_4 &= \lambda^2 + \mu^2 + \lambda^2\mu^2\\
a_6 &= -\lambda^2\mu^2 & &
\end{align*}
By Remark \ref{Section 4 - Remark: we have 3-torsion points}, we have that
\[[3]P_1 = [3]P_2 = \infty,\]
and
\[[2]P_1 = P_2.\]
So we compute the double of $P_1$ with respect to the group structure on $(\bar X_A,@)$ and compare it with $P_2$. Let $P_1 = (x_1,y_1) = (0,i\lambda\mu)$ and $P_2 = (x_2,y_2) = (0,-i\lambda\mu)$. Let
\[
\alpha  = \frac{3x_1^2 + 2a_2x_1 + a_4 - a_1y_1}{2y_1 + a_1x_1 + a_3}\\
        = \frac{\lambda^2+\mu^2+\lambda^2\mu^2}{2i\lambda\mu}
\]
and
\[
\beta   = \frac{-x_1^3 + a_4x_1 + 2a_6 - a_3y_1}{2y_1 + a_1x_1 + a_3}\\
        = \frac{-2\lambda^2\mu^2}{2i\lambda\mu} = i\lambda\mu.
\]
Then by \cite[III.2.3]{Sil92}, the $x$-coordinate of $[2]P_1$ is given by
\beq
x([2]P_B) &=& \alpha^2 + a_1\alpha - a_2 - x_1 - x_1\\ 
        &=& \left(\frac{\lambda^2+\mu^2+ \lambda^2\mu^2}{2i\lambda\mu}\right)^2 + 1 + \lambda^2+\mu^2\\
        &=& \frac{\lambda^4+\mu^4+\lambda^4\mu^4 + 2\lambda^2\mu^2+ 2\lambda^4\mu^2+2\lambda^2\mu^4}{-4\lambda^2\mu^2} + 1 + \lambda^2 + \mu^2\\
        &=& \frac{1}{-4\lambda^2\mu^2}\Big(\lambda^4+\mu^4+\lambda^4\mu^4+2\lambda^2\mu^2+2\lambda^4\mu^2+2\lambda^2\mu^4 - {}\\
        & & {} - 4\lambda^2\mu^2 - 4\lambda^4\mu^2 - 4\lambda^2\mu^4\Big)\\
        &=& \frac{1}{-4\lambda^2\mu^2}\Big(\lambda^4+\mu^4+\lambda^4\mu^4 - 2\lambda^2\mu^2 - 2\lambda^4\mu^2- 2\lambda^2\mu^4\Big)\\
        &=& \frac{1}{-4\lambda^2\mu^2}\Big((-\lambda^2-\mu^2+\lambda^2\mu^2)^2-4\lambda^2\mu^2\Big)\\
        &=& \left(\frac{-\lambda^2-\mu^2+\lambda^2\mu^2}{2i\lambda\mu}\right)^2 + 1
\eeq
Since $x([2]P_1) = x_2 = 0$, it follows that
\[0 = \left(\frac{-\lambda^2-\mu^2+\lambda^2\mu^2}{2i\lambda\mu}\right)^2 + 1,\]
and therefore
\[\frac{-\lambda^2-\mu^2+\lambda^2\mu^2}{2i\lambda\mu} = \pm i,\]
or equivalently
\[-\lambda^2 - \mu^2 + \lambda^2\mu^2 = \mp 2\lambda \mu.\]
We distinguish two cases:\\[2ex]
\underline{Case 1:} $-\lambda^2 - \mu^2+ \lambda^2\mu^2 = - 2\lambda\mu$\\
Then one has
\[- \lambda^2 - \mu^2 + 2\lambda\mu = -(\lambda - \mu)^2 = -\lambda^2\mu^2,\]
hence
\[\lambda - \mu  \pm \lambda \mu = 0.\]
\underline{Case 2:} $-\lambda^2 - \mu^2+ \lambda^2\mu^2 = + 2\lambda\mu$\\
Then one has
\[- \lambda^2 - \mu^2 - 2\lambda\mu = -(\lambda + \mu)^2 = -\lambda^2\mu^2,\]
which implies
\[\lambda + \mu \pm \lambda \mu =0.\]
Since $y([2]P_1)$ evaluates to
\beq
y([2]P_1) &=& -( \alpha + a_1)x([2]P_1) - \beta - a_3\\
        &=& -\beta\\
        &=& -i\lambda\mu = y_2,
\eeq
the equation $[2]P_1 = P_2$ is fulfilled, if $(\lambda,\mu)$ lies on one of the four affine curves 
\beqn \label{Equation: Vs} V_{\eps_1,\eps_2}:\quad \eps_2\mu(\eps_1\lambda + 1) - \eps_1\lambda &=& 0.\eeqn
with $\{\eps_1,\eps_2\}\in\{\pm 1\}^2$.
One easily checks that $P_1+P_2 = \infty$ in each of these cases. It remains to explain, why these four cases reduce to a single one. Recall from Proposition \ref{Section 2 - Proposition: Surface S in the moduli space} that the group $\Gamma$ acts on the parameter space $P$ and that the projection $P\ra M_2$ factors through $P/\Gamma$. We show that the curves $V_{\eps_1,\eps_2}$ are all in the same $\Gamma$-orbit, whence they are mapped to one curve in $M_2$:
\begin{align}\label{Equation: Action of d and e on Vs} d\cdot V_{\eps_1,\eps_2} = V_{-\eps_1,\eps_2},\qquad e\cdot V_{\eps_1,\eps_2} = V_{-\eps_1,-\eps_2}, \qquad ed\cdot V_{\eps_1,\eps_2} = V_{\eps_1,-\eps_2}.\end{align}
Let $V := V_{1,1}$. We set
\[F:\C\setminus\{0,\pm 1, -\tfrac{1}{2}, -2\}\ra P,\ \lambda\mapsto \left(\lambda,\frac{\lambda}{\lambda+1}\right).\]
This yields a well-defined injective morphism with image $V$. Therefore, every Riemann surface on the origami curve $\Curve_S$ is birational to a curve $C_{\lambda,\mu}$ with $(\lambda,\mu) \in V$. Thus, $\Curve_S \subset \pr(V)$, where $\pr:P\ra M_2$ is the projection from Proposition \ref{Section 2 - Proposition: Surface S in the moduli space}. Since the maps $F$ and $\pr$ are continuous for the Zariski-topology, since $\Curve_S$ is closed in $M_2$ and since $\C\setminus\{0,\pm 1, -\tfrac{1}{2}, -2\}$ is irreducible, we conclude that $\Curve_S = \pr(V)$.
\end{proof}

\subsection{Nonsingularity of $\Curve_S$}
As the origami curve $\Curve_S$ is the image of a complex geodesic in the Teichmüller space, it can have at most transverse self-intersections as singularities (see e.g. \cite[Proposition 2.10]{Lo03}). We show in the following that such self-intersections cannot occur, since there is a nonsingular curve that is mapped injectively onto $\Curve_S$.

Let $V = V_{1,1} \subset P$ be the affine curve defined in Equation \eqref{Equation: Vs}. Then $\pr(V) = \Curve_S$ by Proposition \ref{Section 4 - Proposition: Equation of Curve_S}. Observe that the curve $V$ is irreducible and regular. The subgroup $G:= \Stab_\Gamma(V)$ acts on $V$, and the quotient $V/G$ is again a nonsingular curve. Thus, the nonsingularity of $\Curve_S$ follows from the next proposition.

\begin{prop}\label{Section 4 - Proposition: V/G to M_2 is injective}
The nonsingular curve $V/\Stab_\Gamma(V)$ is mapped injectively onto $\Curve_S \subset M_2$.
\end{prop}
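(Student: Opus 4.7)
The plan is to show that if $P_1, P_2 \in V$ have the same image in $M_2$, then they are related by an element of $G := \Stab_\Gamma(V)$. Combined with the obvious fact that $V \to \Curve_S$ factors through $V/G$, this gives the claimed injectivity.

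By Propositions \ref{Section 2 - Proposition: Surface S in the moduli space}(b) and \ref{Section 2 - Proposition: Where P mod Gamma to M_2 is not injective}, two points of $P$ share the same image in $M_2$ in exactly one of two ways: either (i) they lie in a common $\Gamma$-orbit, or (ii) they lie in two distinct $\Gamma$-orbits that both project into the $2{:}1$ locus $U \setminus \{Q\}$. I would treat these two cases in parallel, and both eventually reduce to a finite analysis on $V \cap \Gamma F$.

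In case (i), write $P_2 = \gamma P_1$. If $\gamma \in G$ we are done; otherwise the irreducible curves $V$ and $\gamma V$ are distinct, hence meet in a finite set, so the ``bad'' locus $V' := V \cap \bigcup_{\gamma \in \Gamma \setminus G} \gamma V$ is a finite subset of $V$. Outside $V'$ no such $\gamma$ can occur, so $V/G \to \Curve_S$ is injective on a Zariski-dense open subset. At each exceptional $P_1 \in V'$ one must exhibit some $g \in G$ with $g P_1 = \gamma P_1$, that is, $g^{-1}\gamma \in \Stab_\Gamma(P_1)$; by Proposition \ref{Section 2 - Proposition: Where P mod Gamma to M_2 is not injective}, every point of $P$ with non-trivial $\Gamma$-stabilizer lies in $\Gamma \cdot F$, so the exceptional $P_1$ must lie in $V \cap \Gamma F$. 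Case (ii) likewise forces $P_1, P_2 \in V \cap \Gamma F$, since $\pr^{-1}(U) = \Gamma \cdot F$; the additional requirement there is that whenever both orbits of such a $2{:}1$ fibre meet $V$, the two intersection points lie in a common $G$-orbit on $V$.

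Both cases thus reduce to analysing the finite set $V \cap \Gamma F$. The $\Gamma$-orbit of $F$ is a short, explicit list of curves, obtained by applying the generators $a,b,c,d,e$ of $\Gamma$: for instance $\{\lambda\mu = \pm 1\}$, $\{\mu = \pm \lambda^2\}$, $\{\lambda = \pm \mu^2\}$. Parametrising $V$ by $\lambda \mapsto (\lambda, \lambda/(\lambda+1))$, each of these curves cuts $V$ in the roots of a low-degree polynomial in $\lambda$. For each intersection point the required verification is a direct computation; as a representative example, $V \cap F$ is given by $\lambda^2 - \lambda - 1 = 0$, yielding the pair $(\phi, 1/\phi)$ and $(-1/\phi, -\phi)$ with $\phi = (1+\sqrt{5})/2$, and the involution $eb \in \Gamma$, which acts as $(\lambda,\mu) \mapsto (-\mu, -\lambda)$, preserves the equation $\mu(\lambda+1) = \lambda$ of $V$ (so belongs to $G$) and exchanges these two points, settling this pair. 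The main obstacle is to complete this finite enumeration uniformly, producing the requisite $g \in G$ (or ruling out the configuration) in every instance.
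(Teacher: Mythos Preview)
Your overall plan---factor $V \to \Curve_S$ through $V/G \to P/\Gamma \to M_2$ and treat the two possible failures of injectivity separately, reducing to a finite check---is the paper's strategy. The gap is in your reduction of case~(i) to $V \cap \Gamma\cdot F$. You argue: at an exceptional $P_1 \in V'$ one must exhibit $g \in G$ with $g^{-1}\gamma \in \Stab_\Gamma(P_1)$, and since points with non-trivial stabilizer lie in $\Gamma\cdot F$, the point $P_1$ lies there too. But the non-triviality of $\Stab_\Gamma(P_1)$ is exactly what the existence of such a $g$ would give you; you are invoking the desired conclusion to locate $P_1$. A priori a point can sit in $V \cap \gamma V$ with $\gamma V \neq V$ and still have trivial $\Gamma$-stabilizer---and then injectivity would genuinely fail there while your enumeration over $\Gamma\cdot F$ misses it. (Your case~(ii) rests on the analogous unproved assertion $\pr^{-1}(U) = \Gamma\cdot F$.)

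The paper closes this gap by working with the $\Gamma$-orbit of the curve $V$ rather than of $F$ (Lemma~\ref{Section 4 - Lemma: Stabilizer of V}): this orbit consists of eight explicit irreducible curves $V_{\eps_1,\eps_2}$, $W_{\eps_1,\eps_2}$, and one then directly lists the intersections $V \cap \gamma V$ for each $\gamma V \neq V$. That single computation simultaneously yields $G = \Stab_\Gamma(V) \cong S_3$, the full exceptional set $V' = \{q_1,q_2,r_1,\dots,r_6\}$ as two $G$-orbits, and the $\Gamma$-stabilizers of these points (so each indeed has non-trivial stabilizer, a posteriori). Injectivity of $V/G \to P/\Gamma$ and then of $\barpr$ restricted to $\kappa(V)$ follow at once via Proposition~\ref{Section 2 - Proposition: Where P mod Gamma to M_2 is not injective}. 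It so happens that $V \cap \Gamma\cdot F$ coincides with $V'$ here, so your enumeration would eventually hit the same eight points; but to \emph{know} that these exhaust $V'$ you need the orbit of $V$, not of $F$.
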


Note that $V$ (and consequently $V/G$) is affine of genus 0. Thus Proposition \ref{Section 4 - Proposition: V/G to M_2 is injective} implies the following corollary.

\begin{cor} \label{Section 4 - Corollary: Curve_S is regular}
The Teichmüller curve $\Curve_S$ is a regular, affine curve of genus 0.
\end{cor}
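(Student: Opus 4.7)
The plan is to deduce the three properties (regular, affine, genus 0) of $\Curve_S$ directly from the structure of $V$ together with Proposition \ref{Section 4 - Proposition: V/G to M_2 is injective}. First, I would unwind the parametrization $F:\lambda\mapsto (\lambda,\lambda/(\lambda+1))$ given in the proof of Proposition \ref{Section 4 - Proposition: Equation of Curve_S}: it identifies $V$ biregularly with the affine open $\C\setminus\{0,\pm 1,-\tfrac{1}{2},-2\}$ of $\P^1$. In particular, $V$ is a smooth rational affine curve of genus $0$.

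Next, I would observe that $G=\Stab_\Gamma(V)$ is finite, because $\Gamma\isom V_4\rtimes_\varphi D_6$ is finite by Proposition \ref{Section 2 - Proposition: Surface S in the moduli space}, and it acts on $V$ by automorphisms of algebraic curves. The quotient of a smooth rational curve by a finite group of automorphisms is again smooth and rational (every finite subgroup of $\Aut(\P^1)$ is a group of Möbius transformations, and $V/G$ is then an open subset of $\P^1/G\isom\P^1$). Thus $V/G$ is a smooth, affine, rational curve.

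By Proposition \ref{Section 4 - Proposition: V/G to M_2 is injective}, the induced morphism $V/G\to M_2$ is injective with image $\Curve_S$, so it is a bijective morphism from a nonsingular curve onto $\Curve_S$. Now $\Curve_S$ is the image of a complex geodesic in Teichmüller space, so by the general fact cited at the start of Subsection 4.3 (see \cite[Proposition 2.10]{Lo03}), its only possible singularities are transverse self-intersections; injectivity of $V/G\to\Curve_S$ excludes these. Hence $\Curve_S$ is nonsingular, and the bijective morphism $V/G\to\Curve_S$ is a bijective morphism between nonsingular curves, therefore an isomorphism. Affineness of $\Curve_S$ then follows either from the affineness of $V/G$ via this isomorphism, or from the fact that $\Curve_S$ is a subvariety of the affine variety $M_2$ (Proposition \ref{Section 2 - Propositon: Overview of M_2}); genus $0$ is transported from $V/G$.

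The only delicate step is justifying that the bijective morphism is actually an isomorphism of algebraic varieties, which is where the absence of singularities on $\Curve_S$ is essential; this rests on citing the structural fact about Teichmüller curves having only transverse self-intersections as singularities, together with Proposition \ref{Section 4 - Proposition: V/G to M_2 is injective}. Everything else is a straightforward packaging of already established facts.
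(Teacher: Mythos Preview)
Your proposal is correct and follows essentially the same approach as the paper: the paper notes that $V$ (hence $V/G$) is nonsingular, affine and of genus $0$, invokes the fact that Teichm\"uller curves can only have transverse self-intersections, and uses the injectivity in Proposition \ref{Section 4 - Proposition: V/G to M_2 is injective} to rule these out. You have simply spelled out in more detail the steps the paper leaves implicit (why $V/G$ is smooth and rational, and why the resulting bijection is an isomorphism).
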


To prove Proposition \ref{Section 4 - Proposition: V/G to M_2 is injective}, we need to understand how $\Gamma$ acts on the curve $V$. This is essentially done in Lemma \ref{Section 4 - Lemma: Stabilizer of V}, which we are going to state now. 
First, have a look at the $\Gamma$-orbit of the irreducible component $V = V_{1,1}$. The action of the subgroup $\gen{d,e}$ has already been described in \eqref{Equation: Action of d and e on Vs}. The points on $b\cdot V_{\eps_1,\eps_2}$ satisfy the equation
\[\eps_2\lambda(\eps_1\mu + 1) - \eps_1\mu = 0,\]
which is equivalent to
\[-\eps_1\mu(-\eps_2\lambda + 1) - (-\eps_2\lambda) = 0,\]
so $b\cdot V_{\eps_1,\eps_2} = V_{-\eps_2,-\eps_1}$. For the element $c\in \Gamma$, we find that $c\cdot V_{\eps_1,\eps_2}$ is given by
\[\eps_2\lambda^{-1}\mu(\eps_1\lambda^{-1} + 1) - \eps_1\lambda^{-1} = 0.\]
We multiply this equation by $\lambda^2$ and get
\[\eps_1\eps_2\mu(\eps_1\lambda + 1) - \eps_1\lambda = 0.\]
Thus,
\[c \cdot V_{\eps_1,\eps_2} = V_{\eps_1,\eps_1\eps_2}.\]
Finally, applying $a\in \Gamma$ to $V_{\eps_1,\eps_2}$ yields
\[\eps_2\mu^{-1}(\eps_1\lambda^{-1} + 1) - \eps_1\lambda^{-1} = 0.\]
By multiplying this equation with $\eps_1\lambda\eps_2\mu$, we can rewrite it as
\begin{align} \label{Equation: Ws} 1 + \eps_1 \lambda - \eps_2 \mu = 0.\end{align}
Let the curves $W_{\eps_1,\eps_2}$ be defined by these equations, \ie $W_{\eps_1,\eps_2} = a\cdot V_{\eps_1,\eps_2}$. Since $a$ is a central element of $\Gamma$, the subgroup $\gen{b,c,d,e}$ of $\Gamma$ acts on the $W$'s as it acts on the $V$'s.

\begin{lem} \label{Section 4 - Lemma: Stabilizer of V}
\begin{enumerate}[a)]
\item The $\Gamma$-orbit of $V = V_{1,1}$ consists of the set of 8 irreducible components
\[M = \set{V_{\eps_1,\eps_2}}{(\eps_1,\eps_2) \in \{\pm 1\}^2}\ \cup\ \set{W_{\eps_1,\eps_2}}{(\eps_1,\eps_2)\in \{\pm 1\}^2},\]
and $\Gamma$ acts on $M$ as follows:
\begin{align*}
a\cdot V_{\eps_1,\eps_2} &= W_{\eps_1,\eps_2} & b\cdot V_{\eps_1,\eps_2} &= V_{-\eps_2,-\eps_1} & c\cdot V_{\eps_1,\eps_2} &= V_{\eps_1,\eps_1\eps_2}\\
d\cdot V_{\eps_1,\eps_2} &= V_{-\eps_1,\eps_2}& e\cdot V_{\eps_1,\eps_2} &= V_{-\eps_1,-\eps_2} & &\\
a\cdot W_{\eps_1,\eps_2} &= V_{\eps_1,\eps_2} & b\cdot W_{\eps_1,\eps_2} &= W_{-\eps_2,-\eps_1} & c\cdot W_{\eps_1,\eps_2} &= W_{\eps_1,\eps_1\eps_2}\\
d\cdot W_{\eps_1,\eps_2} &= W_{-\eps_1,\eps_2}& e\cdot W_{\eps_1,\eps_2} &= W_{-\eps_1,-\eps_2}. & &
\end{align*}
\item The stabilizer of $V$ in $\Gamma$ is the subgroup
\[G = \gen{c, dbd} \isom S_3,\]
and the $G$-orbit of a point $(\lambda,\mu) \in V$ is
\begin{align*}
(\lambda,\mu)&,& c\cdot(\lambda,\mu) &= (\lambda^{-1},\lambda^{-1}\mu),\\
cdbd\cdot(\lambda,\mu) &= (-\mu^{-1}, \lambda\mu^{-1}), & dbd\cdot(\lambda,\mu) &= (-\mu,-\lambda),\\
(cdbd)^2\cdot(\lambda,\mu) &= (-\lambda^{-1}\mu,-\lambda^{-1}), & cdbdc\cdot (\lambda,\mu) &= (-\lambda\mu^{-1},\mu^{-1})
\end{align*}
\item $V_{\eps_1,\eps_2} \cap V_{\eps_3,\eps_4} = \emptyset$ for $(\eps_1,\eps_2) \neq (\eps_3,\eps_4)$, and $|V_{\eps_1,\eps_2} \cap W_{\eps_3,\eps_4}| = 2$ for any $(\eps_1,\eps_2), (\eps_3,\eps_4) \in \{\pm 1\}^2$.
\item The points on $V$ with nontrivial stabilizer in $\Gamma$ form two $G$-orbits:
\[\left\{q_1 = (e^{2i\pi/3}, e^{i\pi/3}), q_2 = (e^{-2i\pi/3}, e^{-i\pi/3})\right\}\]
is a $G$-orbit with $\Stab_G(q_i) = \gen{cdbd}$ ($i=1,2$). The corresponding $\Gamma$-orbit has 8 elements and $\Stab_\Gamma(q_i) = \gen{ca,cdbd}\isom S_3$. The $G$-orbit
\begin{align*}
G\cdot\Bigl(\tfrac{-1+\sqrt{5}}{2}, \tfrac{3-\sqrt{5}}{2}\Bigr) = \biggl\{
	&r_1 = \Bigl(\tfrac{-1+\sqrt{5}}{2}, \tfrac{3-\sqrt{5}}{2}\Bigr), r_2 = \Bigl(\tfrac{-1-\sqrt{5}}{2}, \tfrac{3+\sqrt{5}}{2}\Bigr),\\
	&r_3 = \Bigl(\tfrac{1+\sqrt{5}}{2}, \tfrac{-1+\sqrt{5}}{2}\Bigr), r_4 = \Bigl(\tfrac{1-\sqrt{5}}{2}, \tfrac{-1-\sqrt{5}}{2}\Bigr)\\
	&r_5 = \Bigl(\tfrac{-3+\sqrt{5}}{2}, \tfrac{1-\sqrt{5}}{2}\Bigr), r_6 = \Bigl(\tfrac{-3-\sqrt{5}}{2}, \tfrac{1+\sqrt{5}}{2}\Bigr)\biggl\}
\end{align*}
corresponds to a $\Gamma$-orbit with 24 elements and $\Stab_\Gamma(r_i) \isom \Z/2\Z$ ($i=1,\dots,6$).
\end{enumerate}
\end{lem}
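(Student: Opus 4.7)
The plan is to prove each part by direct computation from the generator formulas of $\Gamma$ in Proposition \ref{Section 2 - Proposition: Surface S in the moduli space}, cumulatively building on the action formulas derived in the paragraph just before the lemma. The key structural fact to exploit is that $a$ is central in $\Gamma$ (a quick check on the generators $b, c, d, e$) with $a^2 = \id$. Part (a) is then bookkeeping: for $g \in \{b, c, d, e\}$ one writes $g \cdot W_{\eps_1,\eps_2} = ga \cdot V_{\eps_1,\eps_2} = a \cdot (g \cdot V_{\eps_1,\eps_2})$ and reads off the table from the $V$-action formulas. Inspection shows $M$ is a single $\Gamma$-orbit of size $8$, and since $|\Gamma| = 48$ we get $|\Stab_\Gamma(V)| = 6$ by orbit-stabilizer.

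For part (b), the formulas from (a) immediately yield $c \cdot V_{1,1} = V_{1,1}$ and $dbd \cdot V_{1,1} = V_{1,1}$, so $G = \gen{c, dbd} \subseteq \Stab_\Gamma(V)$. Computing the actions on points, $c \colon (\lambda,\mu) \mapsto (\lambda^{-1}, \lambda^{-1}\mu)$ and $dbd \colon (\lambda,\mu) \mapsto (-\mu, -\lambda)$ are both involutions, while $cdbd \colon (\lambda, \mu) \mapsto (-\mu^{-1}, \lambda\mu^{-1})$ has order $3$; this gives $G \isom S_3$ of order $6 = |\Stab_\Gamma(V)|$, so $G = \Stab_\Gamma(V)$. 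The six $G$-orbit representatives of a point are then obtained by plugging the six group elements into the action.

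For part (c), parametrize $V_{\eps_1,\eps_2}$ by $\mu = \eps_1 \lambda / (\eps_2(\eps_1 \lambda + 1))$ and $W_{\eps_1,\eps_2}$ by $\mu = \eps_2(1 + \eps_1\lambda)$. For distinct $(\eps_1,\eps_2) \neq (\eps_3,\eps_4)$, equating the two $V$-parametrizations and clearing denominators forces $\lambda \in \{0, \pm 1\}$, which is excluded from $P$, so the $V$'s are mutually disjoint. Equating a $V$-parametrization with a $W$-parametrization yields a quadratic in $\lambda$ with nonzero discriminant, and a direct check shows that the two roots avoid the excluded values $0, \pm 1, -\tfrac{1}{2}, -2$, so $|V_{\eps_1,\eps_2} \cap W_{\eps_3,\eps_4}| = 2$ in every case.

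For part (d), any $g \in \Gamma$ fixing a point $(\lambda,\mu) \in V$ must send $V$ to a component of $M$ that also contains that point. If $g \in G$, fixed-point analysis on each nonidentity element of the $S_3$-list from (b) either forces $\lambda$ into the excluded set (for the three involutions) or leads to $\lambda^3 = 1$ (for the two order-$3$ elements), yielding only the candidates $q_1, q_2$. If $g \notin G$, part (c) shows that $g \cdot V$ is one of the four $W$'s, and the candidate fixed points form the set of eight points $V \cap (W_{1,1} \cup W_{-1,1} \cup W_{1,-1} \cup W_{-1,-1})$: after solving the relevant quadratics explicitly, the two points in $V \cap W_{1,1}$ coincide with $q_1, q_2$ while the remaining six are the $r_i$'s. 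At $q_i$, the order-$3$ element $cdbd$ together with the order-$2$ element $ca$ generate $\gen{ca, cdbd} \isom S_3$, the relation $(ca)(cdbd)(ca)^{-1} = (cdbd)^{-1}$ being immediate from centrality of $a$; at each $r_i$ one similarly identifies the fixing element as $a$ composed with the coset representative of $G$ sending $V$ to the relevant $W$. Proposition \ref{Section 2 - Proposition: Where P mod Gamma to M_2 is not injective} rules out further stabilizers, and orbit-stabilizer yields the orbit sizes $48/6 = 8$ and $48/2 = 24$. The main obstacle lies in part (d): verifying that the stabilizer at each $r_i$ is exactly $\Z/2\Z$ rather than larger, and bookkeeping which coset of $G$ each fixing element belongs to.
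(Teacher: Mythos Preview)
Your approach is essentially the same as the paper's: both derive the action formulas on the $V_{\eps_1,\eps_2}$'s by substitution, use centrality of $a$ for the $W$'s, obtain $|G|=6$ by orbit--stabilizer, and handle (c)--(d) via the explicit intersections $V\cap W_{\eps_3,\eps_4}$ together with the fixed-point analysis inside $G$.

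Two points in your part (d) need tightening. First, the fixing element at $r_i$ is not simply ``$a$ composed with the coset representative sending $V$ to the relevant $W$'': for instance $r_1\in V\cap W_{-1,1}=V\cap da\cdot V$, but the paper computes $da\cdot r_1=r_2\neq r_1$; one must then compose $da$ with the element of $G$ carrying $r_2$ back to $r_1$ (which exists because the $r_i$ turn out to form a single $G$-orbit). Second, Proposition~\ref{Section 2 - Proposition: Where P mod Gamma to M_2 is not injective} does not bound stabilizer sizes, so it cannot ``rule out further stabilizers''. The paper instead bounds the $\Gamma$-orbit from below by exhibiting the disjoint cosets $G\cdot q_1,\ dG\cdot q_1,\ eG\cdot q_1,\ edG\cdot q_1$ (giving $|\Gamma\cdot q_1|\geq 8$, hence $|\Stab_\Gamma(q_1)|\leq 6$), and similarly produces $24$ orbit elements for $r_1$. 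A cleaner alternative you could use for the $r_i$: by part (c), $r_1$ lies on exactly two components of $M$, namely $V$ and $W_{-1,1}$, so $\Stab_\Gamma(r_1)$ permutes this two-element set; the kernel of this action stabilizes $V$, hence lies in $G$, but your $G$-fixed-point analysis shows $\Stab_G(r_1)=\{\id\}$, whence $|\Stab_\Gamma(r_1)|\leq 2$.
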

\begin{proof}
\textit{a) and b).} Note that the curves $V_{\eps_1,\eps_2}$ and $W_{\eps_1,\eps_2}$ are all irreducible and distinct, so $M$ is an 8-element set, and we have seen above that $\Gamma$ acts transitively on $M$. Since $\Gamma$ has order $48$, it follows that $G$ has order 6. We have $c\cdot V_{1,1} = V_{1,1}$ and \[dbd\cdot V_{1,1} = db\cdot V_{-1,1} = d\cdot V_{-1,1} = V_{1,1},\]
so $\gen{c, dbd} \subset \Stab_\Gamma(V)$. Moreover, the map 
\[cdbd : (\lambda,\mu)\mapsto (-\mu^{-1}, \mu^{-1}\lambda)\]
has order 3 and $(cdbd)^2c = c(cdbd)$. Altogether, this shows that \[\gen{c,dbd} = \Stab_\Gamma(V) \isom S_3.\]

\textit{c) and d).} If $p$ is a point on $V$ with nontrivial stabilizer in $\Gamma$, then there exists $\gamma\in \Gamma\setminus \{\id\}$ with $p=\gamma p \in V\cap \gamma\cdot V$. So either $\Stab_G(p)$ is nontrivial or $p$ is in $V\cap V'$ with $V'\in M\setminus \{V\}$. First we look for fixed points of elements of $G$. A computation shows that only the elements of order 3 of $G$ have fixed points, namely $q_1$ and $q_2$ are both fixed by $cdbd$: if $(\lambda,\mu) = (-\mu^{-1},\lambda\mu^{-1})$, then $\mu = \lambda\mu^{-1} = -\mu^{-2}$, so $\mu^3 = -1$, and $(\lambda,\mu) \in \{q_1,q_2\}$.

Next, we determine the intersections between elements of $M$. Let $(\eps_1,\eps_2)$, $(\eps_3,\eps_4) \in \{\pm 1\}^2$. First, we show that $V_{\eps_1,\eps_2} \cap V_{\eps_3,\eps_4} = \emptyset$, if $(\eps_1,\eps_2) \neq (\eps_3,\eps_4)$. Note that this also implies $W_{\eps_1,\eps_2} \cap W_{\eps_3,\eps_4} = \emptyset$. If $(\lambda,\mu)\in V_{\eps_1,\eps_2}\cap V_{\eps_3,\eps_4}$, then by \eqref{Equation: Vs}, we have
\[\mu = \frac{\eps_1\eps_2\lambda}{\eps_1\lambda + 1} = \frac{\eps_3\eps_4\lambda}{\eps_3\lambda+ 1}.\]
It follows that
\[(\eps_1\eps_2\eps_3- \eps_1\eps_3\eps_4)\lambda + \eps_1\eps_2  - \eps_3\eps_4  = 0,\]
since $\lambda \neq 0$. If $\eps_2 = \eps_4$, then the coefficient of $\lambda$ is $0$ and $\eps_1\eps_2 = \eps_3\eps_4$, thus $\eps_1 = \eps_3$, which contradicts our assumption. Otherwise, $\eps_4 = -\eps_2$, and we have
\[2\eps_1\eps_2\eps_3\lambda = -\eps_1\eps_2 -\eps_2\eps_3.\]
So $\lambda \in \{0,1,-1\}$, and thus $(\lambda,\mu)\not\in P$.

Next, let $(\lambda,\mu)\in V_{\eps_1,\eps_2} \cap W_{\eps_3,\eps_4}$. Then we have $\mu = \eps_4+ \eps_3\eps_4\lambda$ by \eqref{Equation: Ws}, and together with Equation \eqref{Equation: Vs}, this yields
\begin{align*}
0 &= \eps_2(\eps_4 + \eps_3\eps_4\lambda)(\eps_1\lambda + 1) - \eps_1\lambda \\
  &= \eps_1\eps_2\eps_3\eps_4\lambda^2 + (\eps_1\eps_2\eps_4 + \eps_2\eps_3\eps_4 - \eps_1)\lambda + \eps_2\eps_4
\end{align*}
This equation has the solutions
\begin{align*}
\lambda_{1,2} &= \frac{\eps_1 - \eps_1\eps_2\eps_4 - \eps_2\eps_3\eps_4}{2\eps_1\eps_2\eps_3\eps_4} \pm \frac{\sqrt{3 - 2(\eps_1\eps_3 + \eps_2\eps_4 + \eps_1\eps_2\eps_3\eps_4)}}{2\eps_1\eps_2\eps_3\eps_4}
\end{align*}
Now, we can determine the intersection of $V_{1,1}$ with one of the other curves. One has
\begin{align*}
V_{1,1} \cap W_{1,1} &= V_{1,1} \cap a\cdot V_{1,1} &= \bigl\{q_1 = (e^{2i\pi /3}, e^{i\pi /3}), q_2 = (e^{-2i\pi /3}, e^{-i\pi /3})\bigr\}\\
V_{1,1} \cap W_{-1,1} &= V_{1,1} \cap da\cdot V_{1,1} &= \bigl\{r_1 = \Bigl(\tfrac{-1+\sqrt{5}}{2}, \tfrac{3-\sqrt{5}}{2}\Bigr), r_2 = \Bigl(\tfrac{-1-\sqrt{5}}{2}, \tfrac{3+\sqrt{5}}{2}\Bigr)\bigr\}\\
V_{1,1} \cap W_{-1,-1} &= V_{1,1} \cap ea\cdot V_{1,1} &= \bigl\{r_3 = \Bigl(\tfrac{1+\sqrt{5}}{2}, \tfrac{-1+\sqrt{5}}{2}\Bigr), r_4 = \Bigl(\tfrac{1-\sqrt{5}}{2}, \tfrac{-1-\sqrt{5}}{2}\Bigr)\bigr\}\\
V_{1,1} \cap W_{1,-1} &= V_{1,1} \cap eda\cdot V_{1,1} &= \bigl\{r_5 = \Bigl(\tfrac{-3+\sqrt{5}}{2}, \tfrac{1-\sqrt{5}}{2}\Bigr), r_6 = \Bigl(\tfrac{-3-\sqrt{5}}{2}, \tfrac{1+\sqrt{5}}{2}\Bigr)\bigr\}.
\end{align*}
The maps $a$ and $c$ both exchange $q_1$ and $q_2$, so $ca\in \Stab_\Gamma(q_i)$, $i=1,2$. Therefore, $\Stab_\Gamma(q_i) \supset \gen{ca,cdbd}$. Since $\Gamma\cdot q_i$ contains the 8-element set \[G\cdot q_1 \cup dG\cdot q_1 \cup eG\cdot q_1 \cup edG\cdot q_1,\]
$\Stab_\Gamma(q_i)$ is a group of order at most 6. But then $\Stab_\Gamma(q_i) = \gen{ca,cdbd}$ and it is again isomorphic to $S_3$ (observe that $(cdbd)^2ca = ca(cdbd)$ and $(ca)^2 = (cdbd)^3 = \id$).

The set $\{r_1,\dots,r_6\}$ is a $G$-orbit, for one has $cdbd\cdot r_1 = r_6$ and $(cdbd)^2\cdot r_1 = r_4$ and moreover $dbd\cdot r_1 = r_5$, $dbd\cdot r_6 = r_2$ and $dbd\cdot r_4 = r_3$. Furthermore, the sets
\[G\cdot r_1,\quad dG\cdot r_1, \quad eG\cdot r_1,\quad edG\cdot r_1\]
are all mutually disjoint 6-element sets, so there are at least 24 elements in $\Gamma\cdot r_1$. Thus it suffices to show that $\Stab_\Gamma(r_1)\neq 1$. But $da\cdot r_1 = r_2$ and $G$ acts transitively on $\{r_1,\dots,r_6\}$, so we find $g\in G$ with $g\cdot r_2 = r_1$ and $gda$ fixes $r_1$.
\end{proof}

\begin{proof}[Proof of Proposition \ref{Section 4 - Proposition: V/G to M_2 is injective}]
Note that the restriction of the canonical projection $\kappa: P\ra P/\Gamma$, $p\mapsto \Gamma\cdot p$ to $V$ factors through $V/G$. Moreover, $\pr = \barpr\circ \kappa$ (see Proposition \ref{Section 2 - Proposition: Surface S in the moduli space}). We proceed in two steps.

First, we show that $V/G \ra \kappa(V) \subset P/\Gamma$, $G\cdot x \mapsto \Gamma\cdot x$ is injective. Let $x,y\in V$ with $\Gamma\cdot x = \Gamma\cdot y$. Then $x \in V\cap \gamma\cdot V$ for some $\gamma\in \Gamma$. The proof of Lemma \ref{Section 4 - Lemma: Stabilizer of V} shows that $x \in G\cdot q_1$ or $x\in G\cdot r_1$ and the same holds for $y$. But $\Gamma\cdot q_1 \neq \Gamma\cdot r_1$, thus $G\cdot x = G\cdot y$.

Next, we consider the restriction of $\barpr:P/\Gamma\ra M_2$ to $\kappa(V)$. By Proposition \ref{Section 2 - Proposition: Where P mod Gamma to M_2 is not injective}, we know that $\barpr|_{\kappa(V)}$ is injective outside $\{\Gamma\cdot q_1,\Gamma\cdot r_1\}$, since $\Gamma\cdot q_1 \cup \Gamma\cdot r_1$ are precisely the points on $V$ with nontrivial stabilizer in $\Gamma$. Again by Proposition \ref{Section 2 - Proposition: Where P mod Gamma to M_2 is not injective}, we also know that $\pr(q_1) = Q'$ and $\pr(r_1) \neq Q'$, so altogether, $\barpr|_{\kappa(V)}$ is injective.
\end{proof}

\subsection{Veech group  and cusps}
In this section, we discuss the Veech group $\Gamma(S)$ of the origami curve $\Curve_S$, and use this description to determine its number of cusps, \ie points on the boundary of the moduli space. Recall that Proposition \ref{Section 1 - Proposition: Teichmueller Curve iff Veech group is lattice} states that $\Curve_S$ is birational to $\H/\hat{\Gamma}(S)$. But since $\Curve_S$ is regular, they are even isomorphic.

We will be working with $\H/\Gamma(S)$, which is anti-holomorphic to $\H/\hat\Gamma(S)$. Recall that Remark \ref{Section 1 - Remark: Primitive Origamis have entire Veech group} implies that the group $\Gamma(S)$ is a subgroup of finite index of $\SL_2(\Z)$. With the help of the algorithm in \cite{Sc04}, we can determine generators and coset representatives for $\Gamma(S)$. Let $s = \textmatrix{0}{-1}{1}{0}$ and $t = \textmatrix{1}{1}{0}{1}$ be the standard generators of $\SL_2(\Z)$. Then $\Gamma(S)$ is generated by
\[s^2,\ tst^{-2},\ sts^{-1},\ t^3,\ \txt{and}\ t^2st^{-1}.\]
Coset representatives for $\SL_2(\Z)/\Gamma(S)$ are given by
\[I,\ s,\ t,\ \txt{and}\ t^2.\]
Let $\Fund$ denote the hyperbolic pseudo-triangle with vertices $-\tfrac{1}{2}+i\tfrac{\sqrt{3}}{2}$, $\tfrac{1}{2}+i\tfrac{\sqrt{3}}{2}$ and $i\infty$, which is a fundamental domain for $\SL_2(\Z)$. Then a fundamental domain for the action of $\Gamma(S)$ on $\H$ is given by
\[\Fund_S = \Fund\,\cup\, t(\Fund)\, \cup\, s(\Fund)\, \cup\, t^2(\Fund).\]
A picture of $\Fund_S$ is given in Figure \ref{Figure: Fundamental domain}.
\begin{figure}[th]
\includegraphics[scale=0.3]{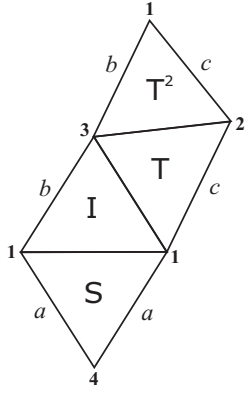}
\caption{A fundamental domain for $\Gamma(S)$ \label{Figure: Fundamental domain}}
\end{figure}
Here, edges that are labeled with the same letter are identified by the action of $\Gamma(S)$. Moreover, $\Fund_S$ is triangulated by 4 triangles with 6 edges and 4 vertices. Using Euler's formula we can thus reprove that $\Curve_S$ is of genus 0. Furthermore, $\H/\Gamma(S)$ has two cusps, namely the vertices 3 and 4. Therefore, $\Curve_S$ also has at most two cusps, and we shall show that it has precisely two.

Recall that $M_g$ is compactified by adding all stable Riemann surfaces of genus $g$. These are obtained by contracting a system of simple closed paths to points. In the case of origamis, it suffices to contract the system of core curves of certain cylinder decompositions of the surface in order to obtain all points on the boundary of the origami curve; see \eg \cite{HS06} for more details. 

The orbits of the two cusps of $\H/\Gamma(S)$ are represented by $0$ and $\infty \in \Fund_S$. To see whether they are distinct in $\bar{M_2}$, we travel along a path to the cusp and see what happens to the Riemann surfaces that correspond to points on that path.

The horizontal, respectively vertical saddle connections induce a decomposition of the origami into two cylinders $(C^h_i)_{i=1,2}$, respectively three cylinders $(C^v_j)_{j=1,2,3}$. Let $c^h_i$, respectively $c^v_j$ be the core curve of the cylinder $C^h_i$, respectively $C^v_j$.

\begin{figure}[ht]
\begin{center}
\setlength{\unitlength}{1cm}
\begin{picture}(11,2.5)
\thinlines
\put(0,0){\framebox(3,1)}
\put(1,1){\framebox(3,1)}
\thicklines
\drawline(0,0)(3,0)
\drawline(0,1)(3,1)
\drawline(1,1)(4,1)
\drawline(1,2)(4,2)
\thinlines

\dottedline[.]{0.1}(0,0.5)(3,0.5)
\dottedline[.]{0.1}(1,1.5)(4,1.5)

\put(-.1,-0.1){\framebox(.2,.2)}
\put(-.1,0.9){\framebox(.2,.2)}
\put(2.9,-0.1){\framebox(.2,.2)}
\put(2.9,0.9){\framebox(.2,.2)}
\put(2.9,1.9){\framebox(.2,.2)}

\put(.9,-0.1){$\blacksquare$}
\put(.9,0.9){$\blacksquare$}
\put(.9,1.9){$\blacksquare$}
\put(3.9,0.9){$\blacksquare$}
\put(3.9,1.9){$\blacksquare$}

\put(7,0){\framebox(1,1)}
\put(8,0){\framebox(2,2)}
\put(10,1){\framebox(1,1)}

\thicklines
\drawline(7,0)(7,1)
\drawline(8,0)(8,2)
\drawline(10,0)(10,2)
\drawline(11,1)(11,2)
\thinlines

\dottedline[.]{0.1}(7.5,0)(7.5,1)
\dottedline[.]{0.1}(9,0)(9,2)
\dottedline[.]{0.1}(10.5,1)(10.5,2)

\put(6.9,-0.1){\framebox(.2,.2)}
\put(6.9,0.9){\framebox(.2,.2)}
\put(9.9,-0.1){\framebox(.2,.2)}
\put(9.9,0.9){\framebox(.2,.2)}
\put(9.9,1.9){\framebox(.2,.2)}

\put(7.9,-0.1){$\blacksquare$}
\put(7.9,0.9){$\blacksquare$}
\put(7.9,1.9){$\blacksquare$}
\put(10.9,0.9){$\blacksquare$}
\put(10.9,1.9){$\blacksquare$}
\end{picture}
\caption{Horizontal and vertical cylinder decomposition of $S$}
\end{center}
\end{figure}
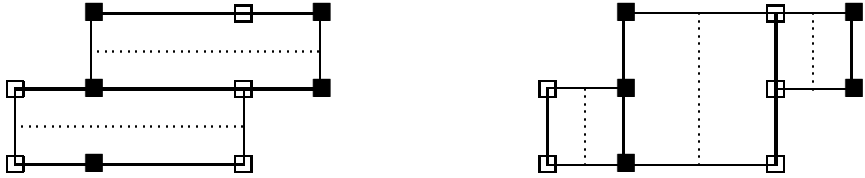

Traveling along the path $\textmatrix{e^{t}}{0}{0}{e^{-t}}\cdot i \in \Fund_S$ ($t\in [0,\infty)$) towards $\infty$ corresponds to pinching the core curves $(c^h_i)$ of the horizontal cylinders. In the limit, we obtain a stable curve $X^h$ with one irreducible component of genus 0 and two nodes. Likewise, traveling along the path $\textmatrix{e^{-t}}{0}{0}{e^{t}}\cdot i \in \Fund_S$ ($t\in [0,\infty)$) towards 0 corresponds to pinching the core curves $(c^v_j)$ of the vertical cylinders. In the limit, we obtain a stable curve $X^v$, which is different from the first, since it consists of two irreducible components of genus 0, which intersect in 3 nodes. The dual graphs to the stable curves $X^h$ and $X^v$ are depicted in Figure \ref{Figure: dual graphs}.

\begin{figure}[ht]
\begin{center}
\setlength{\unitlength}{0.7cm}
\begin{picture}(4,2)
\put(1,1){\bigcircle{2}}
\put(3,1){\bigcircle{2}}
\put(2,1){\circle*{0.2}}
\put(1.6,1){$0$}
\end{picture}
\hspace{1.5cm}
\begin{picture}(4,2)
\put(0,1){\curve(0,0, 2,1, 4,0)}
\put(0,1){\curve(0,0, 2,-1, 4,0)}
\put(0,1){\line(1,0){4}}
\put(0,1){\circle*{0.2}}
\put(4,1){\circle*{0.2}}
\put(-0.4,1){$0$}
\put(4.2,1){$0$}
\end{picture}
\caption{Dual graphs of the stable curves $X^h$ and $X^v$ \label{Figure: dual graphs}}
\end{center}
\end{figure}
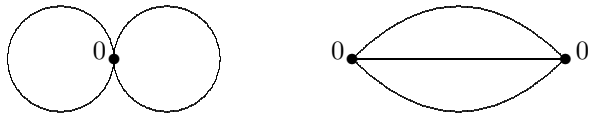

We sum up these observations in the following proposition.
\begin{prop}
The closure of the origami curve $\Curve_S$ in $\bar{M_2}$ is isomorphic to the projective line. It intersects the boundary $\partial\bar{M_2}$ in two distinct points.
\end{prop}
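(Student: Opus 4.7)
The plan is as follows. By Corollary \ref{Section 4 - Corollary: Curve_S is regular} the origami curve $\Curve_S$ is a regular affine curve of genus $0$, so its smooth projective completion is abstractly isomorphic to $\P^1$. What remains is to identify this completion with the closure $\bar{\Curve_S}\subset\bar{M_2}$ and to count the boundary points precisely.

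For the upper bound, I would use the description of $\H/\Gamma(S)$ obtained in the previous subsection. By Proposition \ref{Section 1 - Proposition: Teichmueller Curve iff Veech group is lattice} together with the regularity of $\Curve_S$, we have $\Curve_S\isom\H/\hat\Gamma(S)$; the fundamental domain $\Fund_S$ exhibited above has exactly two cusps (the $\Gamma(S)$-orbits of $0$ and $i\infty$). Filling them in yields the smooth compactification of $\Curve_S$ as $\P^1$ with two points added, so $\bar{\Curve_S}$ can intersect $\partial\bar{M_2}$ in at most two points.

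For the lower bound I would exhibit two distinct stable limits. Travelling along the Teichmüller ray $t\mapsto\textmatrix{e^t}{0}{0}{e^{-t}}\cdot i$ toward $i\infty$ pinches the core curves of the horizontal cylinder decomposition of $X_I$, producing a stable curve $X^h$ whose dual graph is a single vertex (geometric genus $0$) with two self-loops. Travelling along $t\mapsto\textmatrix{e^{-t}}{0}{0}{e^{t}}\cdot i$ toward $0$ pinches the core curves of the three vertical cylinders, producing a stable curve $X^v$ whose dual graph consists of two vertices (each of geometric genus $0$) joined by three edges. These dual graphs differ, so $X^h\not\isom X^v$, and the two cusps of $\H/\Gamma(S)$ map to distinct points of $\partial\bar{M_2}$.

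The main technical point is to justify that these analytic limits along Teichmüller rays actually are the points where $\bar{\Curve_S}$ meets the Deligne--Mumford boundary (and that $\bar{\Curve_S}$ is smooth there). I would invoke the standard description of the boundary behaviour for origami curves (see \eg \cite{HS06}): a cusp of the Veech group corresponds to pinching the core curves of the cylinder decomposition in its parabolic direction, yielding a stable limit in $\bar{M_g}$. Granting this, the smooth completion $\H/\hat\Gamma(S)\cup\{\text{cusps}\}\isom\P^1$ maps to $\bar{M_2}$, restricting to an isomorphism on $\Curve_S$ and sending the two cusps to the distinct points represented by $X^h$ and $X^v$. Hence $\bar{\Curve_S}\isom\P^1$ meets $\partial\bar{M_2}$ in exactly two points.
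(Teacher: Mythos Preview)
Your proposal is correct and follows essentially the same approach as the paper: you use the two cusps of $\H/\Gamma(S)$ as an upper bound, then distinguish them by computing the stable limits $X^h$ and $X^v$ along the horizontal and vertical Teichm\"uller rays and observing that their dual graphs differ. The paper does exactly this (with the same matrices, the same cylinder decompositions, and the same reference to \cite{HS06} for the boundary behaviour), so there is nothing to add.
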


\bibliographystyle{amsalpha}
\bibliography{kappes_biblio}
\end{document}